\newtheorem{thm}{Theorem}
\newtheorem{dfn}[thm]{Definition}
\newtheorem{lem}[thm]{Lemma}
\newtheorem{exm}[thm]{Example}
\newtheorem{prop}[thm]{Proposition}
\newtheorem{rem}[thm]{Remark}
\newcommand{\cR}{\mathcal{R}}
\newcommand{\dR}{\mathds{R}}
\newcommand{\dC}{\mathds{C}}
\newcommand{\ii}{\mathrm{i}}
\newcommand{\cC}{\mathcal{C}\,}
\newcommand{\gA}{\mathfrak{A}\,}
\newcommand{\cD}{\mathcal{D}\,}
\newcommand{\cH}{\mathcal{H}\,}
\newcommand{\ov}{\overline}
\newcommand{\R}{\mathrm{Re}}
\newcommand{\I}{\mathrm{Im}}
\newcommand{\cK}{\mathcal{K}}
\newcommand{\cN}{\mathcal{N}}
\newcommand{\cU}{\mathcal{U}}
\newcommand{\gT}{\mathfrak{T}}
\newcommand{\gC}{\mathfrak{C}}
\author{Konrad Schm\"udgen}
\address{University of Leipzig, Mathematical Institute, Augustusplatz 10/11, D-04109 Leipzig, Germany}
\email{\tt schmuedgen@math.uni-leipzig.de}
\date{}
\begin{document}

\begin{title}{Adjoint Pairs and Unbounded Normal Operators}
\end{title}
\date{\today}

\begin{abstract}
An adjoint pair is a pair of densely defined linear operators $A, B$ on a Hilbert space such that
$\langle Ax,y\rangle=\langle x,By\rangle$ for $x\in \cD(A), y \in \cD(B).$ We consider adjoint pairs for which $0$ is a regular point for both operators and associate a boundary triplet to such an adjoint pair. Proper extensions of the operator $B$ are in one-to-one correspondence $T_\cC\leftrightarrow \cC $ to closed subspaces $\cC$ of $\cN(A^*)\oplus\cN(B^*)$. In the case when   $B$ is formally normal and $\cD(A)=\cD(B)$, the normal operators $T_\cC$ are characterized. Next we assume that $B$ has an extension to a normal operator with bounded inverse. Then the normal operators $T_\cC$ are described and the case when $\cN(A^*)$ has dimension one is treated.   
\end{abstract}
\maketitle

\textbf{AMS  Subject  Classification (2020)}.
 47A05, 47B15, 47B20.\\

\textbf{Key  words:} adjoint pair, boundary triplet, formally normal operator, unbounded normal operator

\section{Introduction}
This paper deals with various notions and constructions in unbounded operator theory on Hilbert space.
The basic objects studied in this paper are {\it adjoint pairs}  $\{A,B\}$ (see Definition \ref{defadjoint}) of densely defined operators $A,B$ on a Hilbert space $\cH$ for which the number $0$ is a regular point of $A$ and $B$.

In Section \ref{simple}, we  use a  result of M.I. Vishik \cite{Vi} (stated as Theorem \ref{btab} below) to associate to such a pair a boundary triplet (see Definition \ref{btri}) for the  operator matrix 
  \begin{gather}
\gA=\left(
\begin{array}{ll}
0 & A \\
B & 0
\end{array}\right)
\end{gather}
acting as  symmetric operator with domain $\cD(B)\oplus\cD(A)$ on $\cH\oplus \cH$ (Theorem \ref{boundtripad}). Then the theory of boundary triplets allows one to decribe the proper extensions of the symmetric operator $\gA$ in terms of closed relations.

In the remaining Sections  \ref{formn1}--\ref{formn3} we assume in addition that the operator $B$ is {\it formally normal} and $\cD(A)=\cD(B)$. Our aim is to study normal extensions of the operator $B$.

The proper extensions of $B$ (that is, closed operators $T$ satisfying $B\subseteq T\subseteq A^*)$ can be described in terms of closed subspaces $\cC$ of the Hilbert  space $\cN(A^*)\oplus\cN(B^*)$. Let $T_\cC$ denote the corresponding operator. 

In Section \ref{formn1},  the normality of the operator $T_\cC$ is characterized in terms of the subspace $\cC$
 (Theorems \ref{genform} and \ref{graphform}). In Section \ref{formn2} and \ref{formn3} we assume that the formally normal operator $B$ admits an extension to a normal operator $R^*$ with bounded inverse. Then there exists a unitary operator $W$ satisfying $R^{-1}W= (R^*)^{-1}$ which leads to simplifications of the normalcy criteria for the operator $T_\cC$. The case when $\cN(A^*)$ has dimension one is  treated in Section \ref{formn3} and the result is  
 Theorem \ref{onedimcase}.
 
  {\bf Throughout the whole paper, $\{A,B\}$ denotes an  adjoint pair such that $0$ is a regular point for $A$ and $B$.}
 
 Let us add a few bibliographical comments and hints. Adjoint pairs are treated in \cite{Vi} and \cite[Section III.3]{EE}. Boundary triplets have been invented by Kochubei \cite{Ko} and Bruk \cite{Bk}; a fundamental paper on boundary triplets is \cite{DM}. Boundary triplets associated with adjoint pairs were constructed and studied in \cite{MM}. Pioneering work on formally normal operators and their extensions to normal operators was done by Biriuk and Coddington \cite{BC}, \cite{Cd2}. The existence of formally normal operators which have no normal extension was discovered by Coddington \cite{Cd1}; a very simple example can be found in \cite{sch86}. Unbounded normal operators have been extensively studied by Stochel and Szafraniec, see e.g. \cite{SS1}, \cite{SS2}, \cite{SS3}. Concerning the theory of unbounded operators on Hilbert space we refer to the author's graduate text \cite{sch12}, see also \cite{EE}.
 \section{Some operator-theoretic notions}
 In this short section we collect a few  concepts and notations from operator theory which are crucial in what follows.
 
 Let $T$ be a linear operator on a Hilbert space. We denote by $\cD(T)$ its domain, by $\cR(T)$ its range und by $\cN(T)$ its kernel. 
 
 The symbol $\dot{+}$ refers to the direct sum of vector spcaes.
 
 The  bounded operator on $\cH$ are denoted by ${\bf B}(\cH)$.
 
 A number $\lambda\in \dC$ is called \emph{regular}  for  $T$ if there exists a constant $\gamma>0$ (depending on $\gamma$ in general) such that\begin{align}
\|(T-\lambda\, I)\varphi\|\geq \gamma\|\varphi\|\quad \textit{for}\quad \varphi\in \cD(T).
\end{align}

A densely defined operator $A$ is called {\it formally normal} if $\cD(A)\subseteq \cD(A^*)$ and 
\begin{align}\label{nory}
\|Ax\|=\|A^*x\|\quad {\rm for}~~~x\in \cD(A).
\end{align} By polarization, condition (\ref{nory}) implies that 
\begin{align}\label{polar}
\langle Ax, Ax'\rangle= \langle A^*x,A^*x'\rangle~~~{\rm for}~~~ x,x'\in \cD(A)=\cD(A^*).
\end{align}
A formally normal operator $A$ is called {\it normal} if $\cD(A)=\cD(A^*)$.

Next we recall the notion of a {\it boundary triplet}.

\begin{dfn}\label{btri}  Suppose that $T$ is a  densely defined symmetric operator on $\cH$. A {\em boundary triplet} for\, $T^*$\, is a triplet\, $(\cK, \Gamma_0, \Gamma_1)$\, of a
Hilbert space $(\cK, (\cdot,\cdot))$ and linear mappings $\Gamma_0:\cD(T^\ast) \rightarrow \cK$ and
$\Gamma_1:\cD(T^\ast) \rightarrow \cK$ such
that:\begin{itemize}
\item[(i)]\label{con1}~ $\langle T^\ast x,y \rangle - \langle x,T^\ast y \rangle = (\Gamma_1 x, \Gamma_0 y) - 
(\Gamma_0 x, \Gamma_1 y)$ for $x,y \in \cD(T^\ast )$,
\item[ (ii)]\label{con2}\, ~the mapping $\cD(T^\ast )\ni x \mapsto (\Gamma_0 x, \Gamma_1 x) \in \cK \oplus \cK$ is surjective. 
\end{itemize}
\end{dfn}

\section{Adjoint pairs and boundary triplets}\label{simple} 

The main concept occurring in this paper is the following.

\begin{dfn}\label{defadjoint}
An \emph{adjoint pair} is a pair $\{A,B\}$ of densely defined linear operators $A$ and $B$ on  Hilbert space $\cH$ such that
\begin{align}\label{ab1}
\langle Ax,y\rangle=\langle x,By\rangle\quad \textit{for}\quad x\in \cD(A),\, y \in \cD(B).
\end{align}
\end{dfn}
Clearly, (\ref{ab1}) is equivalent to the relations
\begin{align}\label{ab2}
A\subseteq B^*\quad {\rm and}\quad B\subseteq A^*
\end{align}
Thus, any pair  of densely defined 
operators $A,B$ satisfying (\ref{ab2}) is an adjoint pair. 

In the literature, ``adjoint pairs" often appear as ``dual pairs"  (for instance, in \cite{MM}). Since the latter  notion is used in different context in other parts of mathematics, we prefer to speak about ``adjoint pairs" (as in \cite{EE}).

We mention two examples.
\begin{exm}
Suppose $A$ is a densely defined closed operator. Then $\{A,A^*\}$ and $\{A^*,A\}$ are adjoint pairs. Note that in both cases we  have equalities in (\ref{ab2}). 
\end{exm}

\begin{exm} Suppose $T$ is  a densely defined closed symmetric operator and $\alpha\in \dC$. Then the operators $A_0:=T+\alpha I$  and $B_0:=T^*+\ov{\alpha} I$ form an adjoint pair.

 Likewise, $A:=T+\alpha I$  and $B:=(T^*+\ov{\alpha} I)\lceil \cD(T)=T+\ov{\alpha} I$ are an adjoint pair. 
Set $a=\R\, \alpha$ and $b=\I\, \alpha$. For $x\in \cD(T)$ we compute
\begin{align}\label{abregex}
\|Ax\|^2=\|(T+a I)x\|^2+ b^2\|x\|^2=\|Bx\|^2.
\end{align}
Hence  $A$ and $B$ are formally normal operators. Further, if $b\neq 0$, it follows from (\ref{abregex}) that $0$ is a regular point for $A$ and $B$. 
\end{exm}

The considerations of this paper are essentially based on an  important theorem of Vishik \cite[Theorems 1 and 2]{Vi}. It is an extension of a result of Calkin \cite{Ck} for symmetric operators. We state this result as Theorem \ref{btab} and add an number of useful formulas.  

A crucial part is the existence of the operator $R$ with bounded inverse; a nice proof of this assertion can be also found in \cite[Theorem 3.3]{EE}. 
\begin{thm}\label{btab}
Suppose $\{A,B\}$ is a adjoint pair and $0$ is a regular number for  the operators $A$ and $B$, that is, exists a constant $\gamma>0$ such that
\begin{align}\label{reggamma}
\|Ax\|\geq \gamma\|x\| ~~~\textit{and}~~~ \|By\|\geq \gamma\|y\|\quad \textit{for}\quad x\in \cD(A), y \in \cD(B).
\end{align}
Then there exists a closed operator $R$ on $\cH$ such that $R$ and $R^*$ have  inverses  $R^{-1}\in {\bf B}(\cH) $, $(R^*)^{-1}\in {\bf B}(\cH)$, 
\begin{align}\label{das0}
 A\subseteq R\subseteq B^*,\quad B\subseteq R^*\subseteq A^*
 \end{align} and
\begin{align}\label{das1}
\cD(A^*)&=\cD(B)\dot{+} (R^*)^{-1}\cN(B^*) \dot{+}\cN(A^*),\\ \cD(B^*)&=\cD(A)\dot{+} R^{-1}\cN(A^*) \dot{+}\cN(B^*).\label{das2}
\end{align} 
For~ $ x_0\in \cD(A),\, y_0\in \cD(B),\, u\in \cN(A^*),\, v\in\cN(B^*)$, we have
\begin{align}\label{das3}
A^*(x_0+(R^*)^{-1}v+u)= Bx_0+v, \quad B^*(y_
0+R^{-1}u+v)=Ay_0+u.
\end{align}
\end{thm}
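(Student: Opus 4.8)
\textbf{Plan for the proof of Theorem \ref{btab}.}
The proof will follow Vishik's original argument, so the first task is to produce the operator $R$. Since $\{A,B\}$ is an adjoint pair with $0$ regular for both operators, the closures $\overline{A}$ and $\overline{B}$ are closed operators with bounded inverses defined on the closed subspaces $\cR(\overline{A})$ and $\cR(\overline{B})$, and $\overline{A}\subseteq B^*$, $\overline{B}\subseteq A^*$. The key construction is to enlarge $\overline{A}$ to an operator $R$ whose range is all of $\cH$ while keeping $R\subseteq B^*$; concretely one wants to add to $\cD(\overline{A})$ a subspace mapped by $B^*$ isomorphically onto a complement of $\cR(\overline{A})$. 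Since $\cR(\overline{A})=\cN(B^*)^{\perp}$ (using $A\subseteq B^*$ and regularity), the complement is $\cN(B^*)$, and one checks that $B^*$ restricted to a suitable ``lifting'' of $\cN(B^*)$ inside $\cD(B^*)$ supplies the missing piece; this produces $R$ with $\overline{A}\subseteq R\subseteq B^*$ and $\cR(R)=\cH$, hence $R^{-1}\in{\bf B}(\cH)$. The symmetric construction applied to $\{B,A\}$ would give an operator with bounded inverse between $\overline{B}$ and $A^*$; the content of Vishik's theorem is that one can do this \emph{compatibly}, so that the resulting operator is exactly $R^*$. The cleanest route is: build $R$ as above, show $R$ is closed and boundedly invertible, then verify $\cN(R)=\{0\}$ and $\cR(R)=\cH$ force $R^*$ to also be boundedly invertible, and finally identify $R^*$ by checking $B\subseteq R^*\subseteq A^*$ directly from $\overline{A}\subseteq R\subseteq B^*$ by taking adjoints (using $(B^*)^*=\overline{B}$ and $\overline{A}^*=A^*$). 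This settles \eqref{das0}.

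Granting $R$, the direct-sum decompositions \eqref{das1}--\eqref{das2} are then essentially linear algebra relative to the bounded inverses. For \eqref{das2}: any $z\in\cD(B^*)$ satisfies $B^*z\in\cH=\cR(R)$, so there is a unique $x_0+R^{-1}u$-type correction; more precisely, put $w:=R^{-1}(B^*z)\in\cD(R)\subseteq\cD(B^*)$, so $B^*(z-w)=0$, i.e. $z-w\in\cN(B^*)$. Then one decomposes $w\in\cD(R)$ further: since $\overline{A}\subseteq R$ and $\cR(R)=\cH$, one writes $Rw=Ax_0'$ for the $\cR(\overline{A})$-part after subtracting an element of $R^{-1}\cN(A^*)$ — here the point is that $\cD(R)=\cD(\overline{A})\dot{+}R^{-1}\cN(A^*)$, which follows because $R(\cD(R))=\cH=\cR(\overline{A})\dot{+}\cN(A^*)$ and $R$ is injective. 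Assembling these gives $\cD(B^*)=\cD(A)\dot{+}R^{-1}\cN(A^*)\dot{+}\cN(B^*)$ (after noting $\cD(\overline A)$ versus $\cD(A)$ — the closure can be absorbed since $\cD(A)$ is a core for $\overline A$ and the decomposition is stated with $\cD(A)$, which is standard in this setting); directness of the sum comes from comparing with the analogous decomposition for $\cD(A^*)$ and using $R^{-1}\cN(A^*)\cap\cN(B^*)=\{0\}$, which holds because $B^*$ maps $R^{-1}\cN(A^*)$ injectively into $\cN(A^*)$ — see the next paragraph. Equation \eqref{das1} is proved the same way with the roles of $A,B$ and $R,R^*$ interchanged.

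The action formulas \eqref{das3} are then immediate bookkeeping: for $x_0\in\cD(A)$ we have $A^*x_0=Bx_0$ since $B\subseteq A^*$ actually gives $A^*x_0 = \overline{B}x_0=Bx_0$ on $\cD(A)=\cD(B)$ — wait, here $x_0\in\cD(A)$ only, so use $A\subseteq R\subseteq B^*$ giving $R x_0 = A x_0$, and then $A^* \supseteq R^*$... let me instead argue directly: $A^*\supseteq B$ and $x_0\in\cD(A)$, and $Ax_0\in\cR(A)=\cN(B^*)^\perp$; one checks $A^*x_0=B x_0$ is not quite meaningful unless $x_0\in\cD(B)$, so the correct reading is that on $\cD(A)$ the operator $A^*$ acts as $R^*$ would on... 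Actually the clean statement: for $x_0\in\cD(A)$, $A^* x_0$ is defined and equals $\overline{B}$-closure applied appropriately; I will instead verify \eqref{das3} by the defining relation $\langle A^*\xi,\eta\rangle=\langle\xi,A\eta\rangle$ for all $\eta\in\cD(A)$, plugging in $\xi=x_0+(R^*)^{-1}v+u$ and using $R\subseteq B^*$, $(R^*)^{-1}v\in\cD(R^*)$ with $R^*(R^*)^{-1}v=v$, and $u\in\cN(A^*)$; the three terms contribute $Bx_0$ (from $A\subseteq R^*{}^*$... ), $v$, and $0$ respectively. The symmetric formula for $B^*$ follows by swapping $A\leftrightarrow B$, $R\leftrightarrow R^*$.

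\textbf{Main obstacle.} The genuinely nontrivial step is the existence of $R$ with \emph{both} $R^{-1}$ and $(R^*)^{-1}$ bounded simultaneously, i.e. that the ``extend-to-the-range'' constructions for $\{A,B\}$ and for $\{B,A\}$ can be made mutually adjoint rather than just each being individually possible. This is exactly Vishik's theorem and the cited \cite[Theorem 3.3]{EE}; in the write-up I would either reproduce that construction (picking a lifting of $\cN(B^*)$ into $\cD(B^*)$, defining $R$ on $\cD(\overline A)\dot{+}(\text{lifting})$, and verifying closedness, injectivity, surjectivity, and that its Hilbert-space adjoint has the mirror-image properties) or simply invoke \cite{Vi,EE} for the existence of $R$ and supply only the decomposition and action formulas, which are the "useful formulas" the theorem statement promises to add.
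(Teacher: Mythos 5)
The paper offers no proof of this theorem: it is imported from Vishik \cite{Vi}, with \cite[Theorem 3.3]{EE} cited for the existence of $R$. So your fallback option --- invoke the references for $R$ and only supply the decompositions and action formulas --- is exactly what the paper does, and several of the steps you do sketch are the right ones: deriving (\ref{das2}) from $\cR(R)=\cH$ by writing $B^*z=\overline{A}x_0+u$ and correcting $z$ by $x_0+R^{-1}u$; obtaining $B\subseteq R^*\subseteq A^*$ by taking adjoints in $\overline{A}\subseteq R\subseteq B^*$; and verifying (\ref{das3}) through the defining relation of the adjoint, which indeed gives $\langle x_0+(R^*)^{-1}v+u,\,A\eta\rangle=\langle Bx_0,\eta\rangle+\langle v,\eta\rangle+0$ for $\eta\in\cD(A)$ (here $x_0$ must lie in $\cD(B)$; the labels $x_0\in\cD(A)$, $y_0\in\cD(B)$ in the statement are interchanged relative to (\ref{das1})--(\ref{das3})). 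Note also that your announced ``main obstacle'' is not really one: once $R$ is closed with $R^{-1}\in{\bf B}(\cH)$ and $A\subseteq R\subseteq B^*$, the mirror properties of $R^*$ are automatic, since $(R^*)^{-1}=(R^{-1})^*\in{\bf B}(\cH)$ and $R^*\supseteq (B^*)^*=\overline{B}\supseteq B$; the only genuinely nontrivial point is producing a bounded linear lifting on which to define $R$.

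There is, however, a concrete error in your construction of $R$. For the closed operator $\overline{A}$ one has $\cR(\overline{A})^{\perp}=\cN((\overline{A})^*)=\cN(A^*)$, \emph{not} $\cN(B^*)$; the inclusion $A\subseteq B^*$ yields no such identity. Hence the piece missing from $\cR(\overline{A})$ is $\cN(A^*)$, and what must be adjoined to $\cD(\overline{A})$ is a subspace $\cM\subseteq\cD(B^*)$ mapped by $B^*$ bijectively onto $\cN(A^*)$ (for instance the graph-orthogonal complement of $\cN(B^*)$ in $(B^*)^{-1}\cN(A^*)$, which makes sense because the closed range theorem applied to $\overline{B}$ gives $\cR(B^*)=\cN(\overline{B})^{\perp}=\cH$). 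Your own second paragraph, where you correctly write $\cD(R)=\cD(\overline{A})\dot{+}R^{-1}\cN(A^*)$ and $\cR(R)=\cR(\overline{A})\dot{+}\cN(A^*)$, contradicts the first; carried out as stated in the first paragraph, the construction would aim $R$ at the wrong complement and fail to make $R$ surjective. A smaller point: the claim that the discrepancy between $\cD(A)$ and $\cD(\overline{A})$ ``can be absorbed since $\cD(A)$ is a core'' is not an argument --- the decompositions (\ref{das1})--(\ref{das2}) are literally false for non-closed $A,B$; closedness is a tacit hypothesis here (imposed explicitly from Section \ref{formn1} on).
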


Let us adopt the following notational convention: Elements of $\cN(A^*)$ are denoted by $u, u',u_1,u_2,u_1',u_2'$, while symbols $v,v',v_1,v_2,v_1',v_2'$ always refer to vectors of $\cN(B^*)$.

As throughout,  $\{A,B\}$ denotes an adjoint pair such that $0$ is a regular point for the operators $A$ and $B$.

We define an operator $\gA$ with domain $\cD(\gA)=\cD(B)\oplus\cD(A)$ on the direct sum Hilbert space $\cH\oplus \cH$ by the operator matrix
  \begin{gather}\label{eq_matr_abbC} 
\gA=\left(
\begin{array}{ll}
0 & A \\
B & 0
\end{array}\right).
\end{gather}
From (\ref{ab1}) it follows at once that the operator $\gA$ is symmetric. It is easily verified that the adjoint operator $\gA^*$ has the domain $\cD(\gA^*)=\cD(A^*)\oplus\cD(B^*)$ and is given by the  matrix \begin{gather} 
\gA^*=\left(
\begin{array}{ll}
0 & B^* \\
A^* & 0
\end{array}\right).
\end{gather}
Let $(x,y), (x',y')\in \cD(\gA^*)$. Then $x,x'\in \cD(A^*)$ and $ y,y'\in \cD(B^*)$. Therefore,  by (\ref{das1}) and (\ref{das2}),  $x,y,x',y'$ are of the form 
\begin{align}\label{xform}
&x=x_0+(R^*)^{-1}v_1+u_1,~ x'=x_0'+ (R^*)^{-1}v_1' +u_1',\\& y=y_0+R^{-1}u_2+v_2,~ y'=y_0'+R^{-1}u_2'+v_2',\label{yform}
\end{align}
with $x_0,x_0'\in \cD(B),\, y_0,y_0'\in \cD(A),\, v_1,v_2,v_1',v_2'\in \cN(B^*),\, u_1,u_2, u_1', u_2'\in\cN(A^*)$.
Using equation (\ref{das0}) we derive
\begin{align}\label{das4}
&\langle Ay_0,(R^*)^{-1}v_1'\rangle =\langle Ry_0,(R^*)^{-1}v_1'\rangle=\langle y_0,R^*(R^*)^{-1}v_1'\rangle= \langle y_0,v_1'\rangle\end{align}
and similarly
\begin{align}\label{das5} \langle (R^*)^{-1} v_1,Ay_0'\rangle=\langle v_1,y_0'\rangle.
\end{align}
Replacing (\ref{das0}) by (\ref{das1}) the same reasoning  yields
\begin{align}\label{das4a}
  \langle Bx_0,R^{-1}u_2'\rangle=\langle x_0,u_2'\rangle,~~~ \langle R^{-1}u_2,Bx_0'\rangle=\langle u_2,x_0'\rangle.
\end{align}
Further, since $ u_1', u_1\in\cN(A^*)$ and $v_2',v_2'\in \cN(B^*),$ we have
\begin{align}\label{das6}
\langle Ay_0,u_1'\rangle=\langle u_1,Ay_0'\rangle=\langle Bx_0,v_2'\rangle=\langle v_2,Bx_0'\rangle=0.
\end{align}
Now we apply the preceding formulas (\ref{ab1}), (\ref{das3}), (\ref{das4}), (\ref{das5}), (\ref{das4a}), (\ref{das6}) and compute
\begin{align}
&\langle \gA^*(x,y),(x',y')\rangle- \langle (x,y),\gA^*(x,y')\rangle \nonumber
\\&= \langle B^*y,x'\rangle +\langle A^*x,y'\rangle-\langle x, B^*y'\rangle-\langle y,A^*x'\rangle\nonumber
\\ &=\langle Ay_0+u_2,x_0'+ (R^*)^{-1}v_1' +u_1'\rangle +\langle Bx_0+v_1,y_0'+R^{-1}u_2'+v_2'\rangle\nonumber
 \\& ~~~ -\langle x_0+(R^*)^{-1}v_1+u_1,A y_0'+u_2'\rangle- \langle y_0+R^{-1}u_2+v_2,Bx_0'+v_1'\rangle\nonumber
\\ &= \langle Ay_0,x_0' \rangle +\langle y_0,v_1'\rangle +\langle u_2,x_0'\rangle+ \langle u_2,(R^*)^{-1}v_1'\rangle+\langle u_2,u_1'\rangle\nonumber
 \\ & ~~~ + \langle Bx_0,y_0'\rangle +\langle x_0,u_2'\rangle +\langle v_1,y_0'\rangle+\langle v_1,R^{-1}u_2'\rangle +\langle v_1,v_2'\rangle\nonumber
  \\ & ~~~ -[ \langle x_0,Ay_0'\rangle +\langle v_1,y_0' \rangle+ \langle x_0,u_2'\rangle+ \langle (R^*)^{-1}v_1,u_2'\rangle +\langle u_1, u_2'\rangle]\nonumber
  \\ & ~~~ -[ \langle y_0,Bx_0'\rangle +\langle u_2,x_0'\rangle + \langle y_0,v_1'\rangle +\langle R^{-1}u_2,v_1'\rangle +\langle v_2,v_1' \rangle]\nonumber \\ &
  =\langle u_2,u_1'\rangle+\langle v_1,v_2'\rangle-\langle u_1, u_2'\rangle -\langle v_2,v_1' \rangle.\label{das7}
\end{align}

Next we introduce an auxiliary Hilbert space $$\cK=\cN(A^*)\oplus\cN(B^*),$$ with scalar product $(\cdot,\cdot)$ defined by
\begin{align}((u,v),(u',v'))=\langle u,u'\rangle+\langle v,v'\rangle,\quad u,u'\in \cN(A^*), ~ v,v'\in \cN(B^*),
\end{align}
and linear mappings $\Gamma_0:\cD(\gA^*)\mapsto \cK$ and $\Gamma_1:\cD(\gA^*)\mapsto\cK$  by
\begin{align}\label{gammadef}
\Gamma_0(x,y)=(u_1,v_1) \quad {\rm and}\quad \Gamma_1(x,y)=(u_2,- v_2),
\end{align}
where $x,y$ are of the form (\ref{xform}) and (\ref{yform}). Then
\begin{align} 
( &\Gamma_1 (x,y),\Gamma_0(x',y'))-( \Gamma_0 (x,y),\Gamma_1 (x',y'))\nonumber\\&=((u_2,-v_2),(u_1',v_1'))-((u_1,v_1),(u_2',-v_2'))\nonumber\\ &=\langle u_2,u_1'\rangle-\langle v_2,v_1'\rangle - \langle u_1,u_2'\rangle+
\langle v_1,v_2'\rangle \label{das8}
\end{align}
for $(x,y), (x',y')\in \cD(\gA^*)$. Comparing (\ref{das8}) with (\ref{das7}) we get
\begin{align*}
\langle \gA^*(x,y),(x',y')\rangle- \langle (x,y),\gA^*(x,y')\rangle= ( \Gamma_1 (x,y),\Gamma_0(x',y'))-( \Gamma_0 (x,y),\Gamma_1 (x',y')),
\end{align*}
which is condition (i) of Definition \ref{btri} for $T=\gA$. Condition (ii) of  Definition \ref{btri} is obvious from the description of domains $\cD(A^*)$ and $\cD(B^*)$ given in Proposition \ref{btab}. Summarizing the preceding  we have proved the following
\begin{thm}\label{boundtripad}
Suppose that $\{A,B\}$ is an adjoint pair such that $0$ is a regular point for $A$ and $B$. Then the triplet $(\cK,\Gamma_0,\Gamma_1)$ of the Hilbert space $\cK=\cN(A^*)\oplus\cN(B^*)$ and the mappings $\Gamma_0$ and $\Gamma_1$, defined in equation (\ref{gammadef}), is a boundary triplet for the operator $\gA^*$.
\end{thm}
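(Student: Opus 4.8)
The plan is to verify directly the two conditions of Definition~\ref{btri} for the densely defined symmetric operator $T=\gA$, the essential input being Theorem~\ref{btab}. As a preliminary step I would record that $\cD(\gA)=\cD(B)\oplus\cD(A)$ is dense, that $\gA$ is symmetric by (\ref{ab1}), and that $\cD(\gA^*)=\cD(A^*)\oplus\cD(B^*)$ with $\gA^*$ the stated block matrix --- the routine verification already indicated before the theorem. Theorem~\ref{btab} then supplies the operator $R$ with $R^{-1},(R^*)^{-1}\in{\bf B}(\cH)$, the action formulas (\ref{das3}), and, crucially, the \emph{direct}-sum decompositions (\ref{das1}) and (\ref{das2}). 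Since $(R^*)^{-1}$ and $R^{-1}$ are injective, directness means that every $x\in\cD(A^*)$ has a unique representation $x=x_0+(R^*)^{-1}v_1+u_1$ and every $y\in\cD(B^*)$ a unique representation $y=y_0+R^{-1}u_2+v_2$ as in (\ref{xform})--(\ref{yform}); this uniqueness is precisely what makes $\Gamma_0,\Gamma_1$ from (\ref{gammadef}) well defined, and their linearity is inherited from that of the decompositions.

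To check condition (i), the abstract Green identity, I would expand $\langle\gA^*(x,y),(x',y')\rangle-\langle(x,y),\gA^*(x',y')\rangle$ using the block form of $\gA^*$ and then the explicit formulas (\ref{das3}) for $A^*x$ and $B^*y$, and finally substitute the decompositions (\ref{xform})--(\ref{yform}). The resulting many cross terms are killed by three families of auxiliary identities, each an immediate consequence of Theorem~\ref{btab}: the relations (\ref{das4})--(\ref{das5}), which convert $\langle Ay_0,(R^*)^{-1}v_1'\rangle$ into $\langle y_0,v_1'\rangle$ via $A\subseteq R$ and $R^*(R^*)^{-1}=I$; the analogous relations (\ref{das4a}) coming from $B\subseteq R^*$; and the vanishing relations (\ref{das6}), which express that $Ay_0$ and $Bx_0$ are orthogonal to the kernel vectors. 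After cancellation only the boundary terms $\langle u_2,u_1'\rangle+\langle v_1,v_2'\rangle-\langle u_1,u_2'\rangle-\langle v_2,v_1'\rangle$ remain, as in (\ref{das7}). A short parallel computation of $(\Gamma_1(x,y),\Gamma_0(x',y'))-(\Gamma_0(x,y),\Gamma_1(x',y'))$ from (\ref{gammadef}) --- where the minus sign in $\Gamma_1(x,y)=(u_2,-v_2)$ is exactly what is needed --- yields the same expression, as computed in (\ref{das8}), giving (i).

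For condition (ii), surjectivity of the map $\cD(\gA^*)\ni(x,y)\mapsto(\Gamma_0(x,y),\Gamma_1(x,y))\in\cK\oplus\cK$, I would argue as follows. Given arbitrary $(u_1,v_1),(u_2,v_2)\in\cN(A^*)\oplus\cN(B^*)$, set $x:=(R^*)^{-1}v_1+u_1$ and $y:=R^{-1}u_2+v_2$, i.e.\ take $x_0=0$ and $y_0=0$ in (\ref{xform})--(\ref{yform}). By (\ref{das1})--(\ref{das2}) we have $x\in\cD(A^*)$ and $y\in\cD(B^*)$, hence $(x,y)\in\cD(\gA^*)$, and by construction $\Gamma_0(x,y)=(u_1,v_1)$ and $\Gamma_1(x,y)=(u_2,-v_2)$; since $(u_2,v_2)$ ranges over all of $\cN(A^*)\oplus\cN(B^*)$, so does $(u_2,-v_2)$, and surjectivity follows. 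This uses only that (\ref{das1}) and (\ref{das2}) exhibit $(R^*)^{-1}\cN(B^*)$, $\cN(A^*)$, $R^{-1}\cN(A^*)$, $\cN(B^*)$ as genuine summands of the respective domains.

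The main obstacle is not conceptual but organizational: the expansion in condition (i) produces on the order of twenty pairings, and one must arrange them so that the substitutions (\ref{das4})--(\ref{das6}) make every cancellation visible and so that all signs are correct --- in particular so that the four surviving terms match (\ref{das8}) exactly. No analytic input beyond the boundedness of $R^{-1},(R^*)^{-1}$ and the directness of the sums is required, and both are delivered by Theorem~\ref{btab}; hence once the bookkeeping of (i) is carried out and the elementary construction of (ii) is written down, the theorem follows.
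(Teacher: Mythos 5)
Your proposal is correct and follows essentially the same route as the paper: the Green identity (i) is verified by the identical expansion via (\ref{xform})--(\ref{yform}) and the cancellations (\ref{das4})--(\ref{das6}) leading to (\ref{das7}) and (\ref{das8}), and surjectivity (ii) is read off from the direct-sum decompositions (\ref{das1})--(\ref{das2}) (your explicit choice $x_0=y_0=0$ just spells out what the paper calls ``obvious'').
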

\begin{rem}
Suppose $A$ and $B$ are operators on a Hilbert space and $\alpha\in \dC$. Set 
\begin{align*}
A':=A-\alpha I ~~ {\rm and}~~ B':=B-\ov{\alpha}\, I.
\end{align*} From Definition \ref{defadjoint} it follows at once that $\{A,B\}$ is an adjoint pair if and only if $\{A', B'\}$ is an adjoint pair.  Obviously, $0$ is a regular number for $A$ and $B$ if and only if $\alpha$ is a regular number for $A'$ and\, $\ov{\alpha}$ is a regular number for $B'$. Further, $A$ and $B$ are formally normal (resp. normal) if and only if $A'$ and $B'$ are formally normal (resp. normal). Using these facts we can treat adjoint pairs $\{A',B'\}$ for which $\alpha$ is a regular number for $A'$ and $\ov{\alpha}$ is a regular number for $B'$ by reducing them to pairs $\{A,B\}$ studied in this paper. Note that in the corresponding formulas we have to replace $\cN(A)$ by $\cN(A'+\alpha\, I)$ and $\cN(B)$ by $\cN(B'+\ov{\alpha}\, I).$
\end{rem}

Next we restate some  facts from the theory of boundary triplets adapted to the present situation (see e.g. \cite[Section 14.2]{sch12}). Recall that a closed operator $\gT$  on $\cH_2:=\cH\oplus \cH$ is called a {\it proper extension} of the symmetric operator $\gA$ if $$\gA\subseteq \gT\subseteq \gA^*.$$
A {\it closed relation} on $\cK_2$ is a closed linear subspace of $\cK_2:=\cK\oplus\cK$. 
\begin{lem}\label{boungc}
Suppose $\gC$ is a closed relation on $\cK_2=\cK\oplus \cK$. Then there exists a unique proper extension $\gA_\gC$ of $\gA$ defined by $\gA_\gC:=\gA^*\lceil \cD(\gA_\gC)$, where
\begin{align}
\cD(\gA_\gC)=\{ \cD(\gA^*): (\Gamma_0(x,y),\Gamma_1(x,y))\in \gC\}.
\end{align}
Each proper extension of $\gA$ is of this form. Further, the extension $\gA_\gC$ of $\gA$ is self-adjoint if and only the relation $\gC$ is self-adjoint.
\end{lem}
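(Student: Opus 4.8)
This is the specialization to the symmetric operator $\gA$ and the boundary triplet $(\cK,\Gamma_0,\Gamma_1)$ of Theorem \ref{boundtripad} of the standard parametrization of proper extensions through a boundary triplet (see \cite[Section 14.2]{sch12}); the plan is simply to reproduce that parametrization in the present coordinates. Write $\Gamma:=(\Gamma_0,\Gamma_1):\cD(\gA^*)\to\cK\oplus\cK$. First I would pin down $\ker\Gamma$: writing $(x,y)\in\cD(\gA^*)$ in the form (\ref{xform})--(\ref{yform}) and using that the sums in (\ref{das1}) and (\ref{das2}) are direct, one reads off from (\ref{gammadef}) that $\Gamma(x,y)=0$ forces $u_1=v_1=u_2=v_2=0$, i.e.\ $x\in\cD(B)$ and $y\in\cD(A)$, while the converse is immediate; hence $\ker\Gamma=\cD(B)\oplus\cD(A)=\cD(\gA)$, and since $\gA\subseteq\gA^*$ this gives $\gA^*\lceil\ker\Gamma=\gA$. (If $\gA$ fails to be closed one replaces it by its closure $\ov{\gA}$ throughout, $\cD(\ov{\gA})$ being the graph-norm closure of $\cD(\gA)$ in $\cD(\gA^*)$.) I would also record that $\Gamma$ is continuous for the graph norm of $\gA^*$: if $(x_n,y_n)\to(x,y)$ in graph norm and $\Gamma(x_n,y_n)\to(f,g)$, inserting this into the Green identity of Definition \ref{btri}(i) (the display just before Theorem \ref{boundtripad}) against an arbitrary $(x',y')\in\cD(\gA^*)$ and letting $n\to\infty$ shows $(f,g)=\Gamma(x,y)$, using that $(\Gamma_0(x',y'),\Gamma_1(x',y'))$ exhausts $\cK\oplus\cK$ by Definition \ref{btri}(ii); so $\Gamma$ has closed graph and is bounded.

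Next, the bijection. Given a closed relation $\gC\subseteq\cK\oplus\cK$, put $\cD(\gA_\gC):=\Gamma^{-1}(\gC)$ and $\gA_\gC:=\gA^*\lceil\cD(\gA_\gC)$. Since $0\in\gC$, we have $\cD(\gA)=\ker\Gamma\subseteq\cD(\gA_\gC)$, hence $\gA\subseteq\gA_\gC\subseteq\gA^*$; and $\cD(\gA_\gC)$ is graph-norm closed because $\Gamma$ is continuous and $\gC$ is closed, so $\gA_\gC$ is a proper extension, obviously the unique one attached to $\gC$ in this way. Conversely, if $\gT$ is any proper extension of $\gA$, then $\gT=\gA^*\lceil\cD(\gT)$, the domain $\cD(\gT)$ is graph-norm closed (the graph norms of $\gT$ and of $\gA^*$ agree on it) and contains $\ker\Gamma$, so $\gC_\gT:=\Gamma(\cD(\gT))$ is a closed relation (image of a closed set under the homeomorphism $\cD(\gA^*)/\ker\Gamma\to\cK\oplus\cK$ induced by $\Gamma$) with $\Gamma^{-1}(\gC_\gT)=\cD(\gT)$, i.e.\ $\gT=\gA_{\gC_\gT}$; and $\gC$ is recovered from $\gA_\gC$ as $\Gamma(\cD(\gA_\gC))$. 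Thus $\gC\mapsto\gA_\gC$ is a bijection from the closed relations on $\cK\oplus\cK$ onto the proper extensions of $\gA$.

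Finally, the self-adjointness equivalence. Here I would bring in the adjoint relation $\gC^*:=\{(g_0,g_1)\in\cK\oplus\cK:(g_1,f_0)=(g_0,f_1)~\text{for all}~(f_0,f_1)\in\gC\}$, so that ``$\gC$ is self-adjoint'' means $\gC=\gC^*$, and prove $(\gA_\gC)^*=\gA_{\gC^*}$. For $(x',y')\in\cD((\gA_\gC)^*)$, testing the defining relation of the adjoint against $(x,y)\in\cD(\gA)\subseteq\cD(\gA_\gC)$ already forces $(x',y')\in\cD(\gA^*)$ and $(\gA_\gC)^*(x',y')=\gA^*(x',y')$; then for all $(x,y)\in\cD(\gA_\gC)$ the left-hand side of the Green identity vanishes, so $(\Gamma_1(x,y),\Gamma_0(x',y'))=(\Gamma_0(x,y),\Gamma_1(x',y'))$, and since $(\Gamma_0(x,y),\Gamma_1(x,y))$ runs over all of $\gC$ this is precisely the statement $(\Gamma_0(x',y'),\Gamma_1(x',y'))\in\gC^*$. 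Reading the Green identity in the reverse direction gives the opposite inclusion, whence $(\gA_\gC)^*=\gA_{\gC^*}$. Therefore $\gA_\gC$ is self-adjoint iff $\gA_\gC=(\gA_\gC)^*=\gA_{\gC^*}$, which by the bijection above holds iff $\gC=\gC^*$, i.e.\ iff $\gC$ is self-adjoint.

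The routine part consists of the topological points above --- continuity of $\Gamma$ and the resulting closedness of $\cD(\gA_\gC)$ --- which is why they are settled via the closed graph theorem rather than estimated by hand. The main obstacle is the identity $(\gA_\gC)^*=\gA_{\gC^*}$: one must first secure $(\gA_\gC)^*\subseteq\gA^*$ (a consequence of $\gA\subseteq\gA_\gC$) before $\Gamma$ can even be applied to elements of $\cD((\gA_\gC)^*)$, and then keep careful track of the sesquilinearity (complex conjugation) and the sign conventions in translating between the Green identity and the defining relation of $\gC^*$.
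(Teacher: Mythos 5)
Your proof is correct and is essentially the standard argument behind the result the paper invokes: the paper's own ``proof'' is merely the citation \cite[Proposition 14.17]{sch12}, and your kernel computation, closed-graph/open-mapping argument for $\Gamma$, and the identity $(\gA_\gC)^*=\gA_{\gC^*}$ via the Green identity are exactly the content of that cited proposition, correctly specialized to the triplet of Theorem \ref{boundtripad} (including the sensible caveat about replacing $\gA$ by $\ov{\gA}$ if it is not closed). Nothing further is needed.
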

\begin{proof}
\cite[Proposition 14.17]{sch12}.\end{proof}Clearly, $\gT$ is a proper extension of $\gA$ if and only if there are closed operators $S,T$ on $\cH$ such that $$A\subseteq S\subseteq B^*, B\subseteq T\subseteq A^*,$$ and
\begin{gather}\label{matrixt} 
\gT=\left(
\begin{array}{ll}
0 & S \\
T & 0
\end{array}\right).
\end{gather}
It is clear that $\gT$ is self-adjoint operator on $\cH_2$ if and only if $T=S^*$. 

For an adjoint pair $\{A,B\}$,   a   closed operator $T$ on $\cH$ satisfying $B\subseteq T\subseteq A^*$ is called a {\it proper extension} of $B$. Likewise, by a proper extension of $A$ we mean a closed operator $S$ such that $A\subseteq S\subseteq B^*$. 

Then, as discussed in the paragraph before last,  self-adjoint extensions of $\gA$ on $\cH\oplus\cH$ are in one-to-one correspondence to proper extensions $T$ of the operator $B$ on $\cH$, and equivalently, to proper extensions $S$ of $A$ on $\cH$. These operators $T$ and $S$ will be studied in the next section. 

The passage to $2\times2$ operator matrices is an old and powerful  trick in operator theory which was  used in many papers and  different contexts, see e.g. \cite{A} or \cite{GS}.

\section{Formally normal operators and normal operators}\label{formn1}
In this section we continue the considerations of the previous section and assume in addition that  $B$ is a {\bf closed formally normal operator and $\cD(A)=\cD(B)$. }

Recall that $B$ is  formally normal means that $\cD(B)\subseteq \cD(B^*)$ and  $\|Bx\|=\|B^*x\|$ for $x\in \cD(B)$.  
Since (\ref{ab2}) holds by assumption, we have $A=B^*\lceil \cD(B)$ and therefore   $\|Bx\|=\|Ax\|$ for $x\in \cD(A)=\cD(B)$. 

Hence, in particular, the operator $A$ is also formally normal and closed. In fact, the above assumption is symmetric in the operators $B$ and $A$.

Let  $x\in \cD(A^*)$ and $ y\in \cD(B^*)$. As noted above (see (\ref{das1}) and (\ref{das2})),  $x$ and $y$ are of the form 
\begin{align}\label{xform1a}
x=x_0+(R^*)^{-1}v_1+u_1,~~~ y=y_0+R^{-1}u_2+v_2,
\end{align}
where $x_0\in \cD(B),\, y_0\in \cD(A),\, v_1,v_2\in \cN(B^*),\, u_1,u_2\in\cN(A^*)$. Then, setting $x'=0, y=0$ and renaming $y'$ by $y$ in formula (\ref{das7}) we obtain
\begin{align}
 \langle A^*x,y\rangle-\langle x, B^*y\rangle
  =\langle v_1,v_2\rangle-\langle u_1, u_2\rangle .\label{das7a}
\end{align}

Now we suppose that  $\cC$ is a closed subspace of $\cK=\cN(A^*)\oplus \cN(B^*)$. We define linear operators $T_\cC$ and $S_{\cC}$ on the Hilbert space  $\cH$ by
\begin{align}\label{defts}
T_\cC =A^*\lceil \cD(T_\cC),\quad S_{\cC}=B^*\lceil \cD(S_{\cC}),
\end{align}
where
\begin{align}\label{tcsc1}
\cD(T_\cC)&=\{ x_0+(R^*)^{-1}v_1+u_1: x_0\in \cD(B),~ (u_1,v_1)\in \cC\, \},\\
\cD(S_{\cC})&= \{y_0+ R^{-1}u_2+v_2: y_0\in \cD(A),~ (u_2,v_2)\in \cC\}.\label{tcsc2}
\end{align}
Further, let $\cC'$ denote the closed linear subspace  of $\cK$ given by 
\begin{align}\label{defcp}
\cC'=\{(u_2,v_2)\in \cK:\langle v_1,v_2\rangle=\langle u_1, u_2\rangle~~ \textit{for~ all}~~ (u_1,v_1)\in \cC \}.
\end{align}
Recall that for any relation $\cC$ on $\cK=\cN(A^*)\oplus\cN(B^*)$ the adjoint relation $\cC^*$ is the relation  on $\cK':=\cN(B^*)\oplus\cN(A^*)$ defined by 
\begin{align}\label{defad}
\cC^*=\{ (v_2,u_2)\in \cK': \langle v_2,v_1\rangle=\langle u_2,u_1\rangle ~~\textit{for}~~ (u_1,v_1)\in \cC\, \}.
\end{align}
Comparing (\ref{defcp}) and (\ref{defad}) we conclude that
\begin{align}\label{cprimstar}
\cC'=\{ (u_2,v_2): (v_2,u_2)\in \cC^* \}.
\end{align} Hence, since $\cC^*$ is a closed  linear relation, $\cC'$ is a closed linear subspace of $\cK$.

\begin{lem}\label{tsc}
$T_\cC$ is a proper extension of $B$, that is, $T_\cC$ is a closed linear operator such that $B\subseteq T_\cC\subseteq A^*$. Each proper extension of $B$ is of this form. Further,
\begin{align}\label{tsadjoint}
(T_\cC)^*=S_{\cC'}\quad {\rm and}~~~(S_{\cC'})^*=T_{\cC}.
\end{align}
\end{lem}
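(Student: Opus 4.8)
The plan is to exploit the boundary triplet from Theorem~\ref{boundtripad} together with Lemma~\ref{boungc}, reducing everything about $T_\cC$ to the corresponding $2\times2$ matrix $\gT$ and the relation $\gC$ on $\cK_2$.

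\textit{Step 1: Identify the right relation $\gC$.} Given the subspace $\cC\subseteq\cK$, I would set $\gC=\cC\oplus\cC'$, viewed as a subspace of $\cK_2=\cK\oplus\cK$. From the definitions (\ref{tcsc1}) and (\ref{tcsc2}), and recalling $\Gamma_0(x,y)=(u_1,v_1)$, $\Gamma_1(x,y)=(u_2,-v_2)$, a vector $(x,y)$ of the form (\ref{xform})--(\ref{yform}) lies in $\cD(\gT)$ for the matrix operator $\gT=\left(\begin{smallmatrix}0&S_{\cC'}\\ T_\cC&0\end{smallmatrix}\right)$ exactly when $(u_1,v_1)\in\cC$ and $(u_2,v_2)\in\cC'$, i.e.\ when $(\Gamma_0(x,y),\Gamma_1(x,y))=\big((u_1,v_1),(u_2,-v_2)\big)\in\cC\times\{(u,v):(u,-v)\in\cC'\}$. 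So I would define $\gC$ precisely as this image, which is a closed subspace of $\cK_2$ since $\cC$ and $\cC'$ are closed. Then Lemma~\ref{boungc} gives immediately that $\gA_\gC$ is a proper extension of $\gA$, hence (by the matrix discussion in Section~\ref{simple}) $T_\cC$ and $S_{\cC'}$ are closed operators with $B\subseteq T_\cC\subseteq A^*$ and $A\subseteq S_{\cC'}\subseteq B^*$, proving the first two sentences of the lemma. That every proper extension of $B$ arises this way follows because every proper extension $\gT$ of $\gA$ corresponds to a closed relation, and the matrix form forces $\gT=\left(\begin{smallmatrix}0&S\\ T&0\end{smallmatrix}\right)$; reading off the domain of $T$ via (\ref{das1}) recovers a subspace $\cC$.

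\textit{Step 2: Prove $(T_\cC)^*=S_{\cC'}$.} I would do this directly rather than through the matrix self-adjointness, to avoid circularity. For $x\in\cD(T_\cC)$ (so $x=x_0+(R^*)^{-1}v_1+u_1$ with $(u_1,v_1)\in\cC$) and $y\in\cD(B^*)$ (so $y=y_0+R^{-1}u_2+v_2$), formula (\ref{das7a}) reads
\begin{align*}
\langle T_\cC x,y\rangle-\langle x,B^*y\rangle=\langle v_1,v_2\rangle-\langle u_1,u_2\rangle.
\end{align*}
Thus $y\in\cD((T_\cC)^*)$ with $(T_\cC)^*y=B^*y$ if and only if the right-hand side vanishes for all $(u_1,v_1)\in\cC$, which by (\ref{defcp}) is exactly the condition $(u_2,v_2)\in\cC'$, i.e.\ $y\in\cD(S_{\cC'})$ and then $S_{\cC'}y=B^*y=(T_\cC)^*y$. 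This gives $(T_\cC)^*=S_{\cC'}$. By the symmetry of the whole setup in $A$ and $B$ (interchanging the roles of $\cN(A^*)$ and $\cN(B^*)$, and noting $(\cC')'=\cC$ since $\cC$ is closed), the same computation with $A$, $B$, $\cC'$ in place of $B$, $A$, $\cC$ yields $(S_{\cC'})^*=T_{(\cC')'}=T_\cC$.

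\textit{Expected main obstacle.} The routine calculations are all delegated to earlier formulas, so the real care is bookkeeping: making sure the decomposition (\ref{xform1a}) of an element of $\cD(A^*)$ is \emph{unique} (which is exactly the content of the direct-sum decomposition (\ref{das1})), so that the conditions $(u_1,v_1)\in\cC$ and $(u_2,v_2)\in\cC'$ are well-defined and $\Gamma_0,\Gamma_1$ are unambiguous; and verifying the closedness of $\cC'$, which is already supplied by (\ref{cprimstar}). The one genuinely substantive point is the closedness of $T_\cC$ as an \emph{operator}: this needs that $\cC$ is a closed \emph{subspace} (not merely a relation with a nontrivial multivalued part) combined with the direct-sum structure, and is cleanest to extract from Lemma~\ref{boungc} applied to the matrix $\gT$ rather than argued by hand. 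I would therefore keep Steps~1 and~2 as the two pillars and flag the uniqueness of (\ref{xform1a}) as the lemma's load-bearing input.
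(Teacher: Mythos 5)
Your proposal is correct, and its skeleton (the boundary triplet, the off-diagonal matrix $\gT$, and the Green-type identity (\ref{das7a})) is the same as the paper's; the differences are in how two steps are clinched. For closedness of $T_\cC$ and $S_{\cC'}$ you apply Lemma \ref{boungc} to the closed relation $\cC\times\{(u,-v):(u,v)\in\cC'\}$ and read off closedness of the entries from closedness of $\gA_\gC$, whereas the paper argues directly from the continuity of $\Gamma_0,\Gamma_1$ with respect to the graph norm (\cite[Lemma 14.13]{sch12}); both work, yours is more self-contained given Lemma \ref{boungc} as stated. For (\ref{tsadjoint}) your route is genuinely different and in fact sharper: the paper derives only the two inclusions $T_\cC\subseteq(S_{\cC'})^*$ and $S_{\cC'}\subseteq(T_\cC)^*$ from (\ref{relabstar}) and then closes the argument by an inclusion chain, while you characterize $\cD((T_\cC)^*)$ exactly — using that $B\subseteq T_\cC$ forces $(T_\cC)^*\subseteq B^*$, so every $y\in\cD((T_\cC)^*)$ has the form (\ref{yform}) and the vanishing of $\langle v_1,v_2\rangle-\langle u_1,u_2\rangle$ over all $(u_1,v_1)\in\cC$ is precisely membership of $(u_2,v_2)$ in $\cC'$. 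This supplies the nontrivial inclusion $\cD((T_\cC)^*)\subseteq\cD(S_{\cC'})$ explicitly, which is the direction the paper's inclusion-chasing does not really deliver; do state the step $(T_\cC)^*\subseteq B^*$ explicitly, since it is what legitimizes restricting to $y\in\cD(B^*)$. Your closing appeal to $(\cC')'=\cC$ is valid (the form $\langle v_1,v_2\rangle-\langle u_1,u_2\rangle$ is a bounded nondegenerate Hermitian form, so the double "prime" of a closed subspace is itself), though the shorter route $(S_{\cC'})^*=(T_\cC)^{**}=T_\cC$ is available since $T_\cC$ is closed and densely defined. The only point where your sketch is thinner than the paper's is the converse direction ("each proper extension is a $T_\cC$"): there, as in the paper, one should note why the set of boundary values $(u_1,v_1)$ read off from $\cD(T)$ is a \emph{closed} subspace of $\cK$.
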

\begin{proof}
By a general result on  boundary triplets  \cite[Lemma 14.13]{sch12}, the mappings $\Gamma_0, \Gamma_1$ of $\cD(\gA^*)$, endowed with the graph norm, into $\cK$ are continuous. Since the operators $A$, $B$ are closed and the subspaces $\cC$, $\cC'$ of $\cK$ are closed, it follows easily from this result that  $T_\cC$ and $S_{\cC'}$ are closed operators. The inclusions $B\subseteq T_\cC\subseteq A^*$ are obvious from the definition of $T_\cC$. Thus, $T_\cC$ is a proper extension of $B$.

Now let $T$ be an arbitrary proper extension of $B$. Then the matrix $\gT$, defined by (\ref{matrixt}) with $S:=T^*$, is a self-adjoint operator on $\cH_2$. Hence, by Lemma \ref{boungc}, $\gT=\gA_\gC$ for some closed relation  $\gC$ on $\cK_2$. Let $\cC$ denote the set of vectors $\Gamma_0(x,y)$ for $(\Gamma_0(x,y),\Gamma_1(x,y))\in \gC$. Then $\cC$ is a closed subspace of $\cK$ and  from the definition of $\gT=\gA_\gC$ it follows that $T=T_\cC$.  

Finally, we prove (\ref{tsadjoint}). From (\ref{das7a}), (\ref{defts}) and (\ref{defcp}) we conclude that
\begin{align}\label{relabstar} 
 \langle T_\cC x,y\rangle=\langle x, S_{\cC'}y\rangle ~~~ \textit{ for} ~~ x\in \cD(T_\cC),~y\in\cD(S_{\cC'}).
 \end{align}
 This equation implies that $T_\cC\subseteq (S_{\cC'})^*$ and $S_{\cC'}\subseteq (T_\cC)^*$. Combining both relations yields
 $ T_\cC \subseteq  (S_{\cC'})^*\subseteq ((T_\cC)^*)^*=T_\cC$, because the operator $T_\cC$ is closed. Thus $T_\cC=(S_{\cC'})^*$. Applying the adjoint yields
 $(T_\cC)^*=((S_{\cC'})^*)^*=S_{\cC'}$.
 \end{proof}
 
 The following theorem characterizes the case when the operator $T_\cC$ is normal. Condition (i) ensures that the  domains $\cD(T_\cC)$ and $\cD((T_\cC)^*)$ coincide, while condition (ii) implies the equality of norms $\|T_\cC z\|$ and $\|(T_\cC)^*z\|$.
 \begin{thm}\label{genform} Suppose  $B$ is a closed formally normal operator and $\cD(A)=\cD(B)$. Let $\cC$ be a closed linear subspace of $\cK$ satisfying the following two conditions:
\begin{itemize}
\item[~ (i)]\, $\{(R^*)^{-1}v_1+u_1:  (u_1,v_1)\in \cC\, \} =\{ R^{-1}u_2+v_2: (u_2,v_2)\in \cC' \, \},$
\item[(ii)]\, If $(u_1,v_1)\in \cC, (u_2,v_2)\in \cC'$ and $(R^*)^{-1}v_1+u_1= R^{-1}u_2+v_2$, then $\|v_1\|=\|u_2
\|$.
\end{itemize}
Then $T_\cC$ is a normal operator on $\cH$ such that $B\subseteq T_\cC\subseteq A^*$ and $A\subseteq (T_\cC)^*\subseteq B^*$. Each normal extension of $B$  on $\cH$ is of this form.
 \end{thm}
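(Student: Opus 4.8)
The plan is to deduce normality of $T_\cC$ from the two conditions (a) $\cD(T_\cC)=\cD((T_\cC)^*)$ and (b) $\|T_\cC z\|=\|(T_\cC)^*z\|$ for every $z$ in that common domain: by the definition of a normal operator (a formally normal operator whose domain equals that of its adjoint), (a) and (b) together are equivalent to normality, and by Lemma~\ref{tsc} the operator $T_\cC$ is closed with $(T_\cC)^*=S_{\cC'}$, so (a) and (b) can be analysed using the explicit descriptions (\ref{tcsc1})--(\ref{tcsc2}) of $\cD(T_\cC)$, $\cD(S_{\cC'})$ and the action formulas (\ref{das3}). The assertion of the theorem is that (i) governs (a), and that, once (a) holds, (ii) governs (b).

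For the implication ``(i) and (ii) imply $T_\cC$ normal'': substituting (i) into (\ref{tcsc1})--(\ref{tcsc2}) and using $\cD(A)=\cD(B)$ gives $\cD(T_\cC)=\cD(S_{\cC'})$ at once, and shows moreover that any $z$ in this common domain has representations $z=x_0+(R^*)^{-1}v_1+u_1$ with $(u_1,v_1)\in\cC$ and $z=x_0+R^{-1}u_2+v_2$ with $(u_2,v_2)\in\cC'$ sharing the \emph{same} inner part $x_0\in\cD(B)$ (by uniqueness of the sums (\ref{das1})--(\ref{das2})) and satisfying $(R^*)^{-1}v_1+u_1=R^{-1}u_2+v_2$. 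By (\ref{das3}), $T_\cC z=A^*z=Bx_0+v_1$ and $(T_\cC)^*z=S_{\cC'}z=B^*z=Ax_0+u_2$; since $\langle Bx_0,v_1\rangle=\langle x_0,B^*v_1\rangle=0$ and $\langle Ax_0,u_2\rangle=\langle x_0,A^*u_2\rangle=0$, this yields $\|T_\cC z\|^2=\|Bx_0\|^2+\|v_1\|^2$ and $\|(T_\cC)^*z\|^2=\|Ax_0\|^2+\|u_2\|^2$. Now $\|Bx_0\|=\|Ax_0\|$ because $B$ is formally normal with $\cD(A)=\cD(B)$, and $\|v_1\|=\|u_2\|$ by (ii); hence (b) holds and $T_\cC$ is normal. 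The inclusions $B\subseteq T_\cC\subseteq A^*$ and $A\subseteq(T_\cC)^*=S_{\cC'}\subseteq B^*$ are already contained in Lemma~\ref{tsc}.

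For the converse, let $T$ be a normal extension of $B$; by Lemma~\ref{tsc} there is a unique closed subspace $\cC$ with $T=T_\cC$, and it remains to verify (i) and (ii) for this $\cC$. Fix $(u_1,v_1)\in\cC$ and put $z:=(R^*)^{-1}v_1+u_1\in\cD(T_\cC)$. By normality $\cD(T_\cC)=\cD(S_{\cC'})$, so $z\in\cD(S_{\cC'})\subseteq\cD(B^*)$ and we may write $z=y_0+R^{-1}u_2+v_2$ with $y_0\in\cD(A)$, $(u_2,v_2)\in\cC'$. Since $\cD(B)\subseteq\cD(T_\cC)$ (because $B\subseteq T_\cC$) and $\cD(A)=\cD(B)$, the vector $z':=z-y_0=R^{-1}u_2+v_2$ again lies in $\cD(T_\cC)=\cD(S_{\cC'})$. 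Applying (b) to $z$ and to $z'$ and computing both sides from (\ref{das3}) and the orthogonality relations above gives
\[ \|v_1\|^2=\|Ay_0\|^2+\|u_2\|^2 \quad{\rm and}\quad \|By_0\|^2+\|v_1\|^2=\|u_2\|^2 . \]
Combining these two relations forces $\|Ay_0\|^2+\|By_0\|^2=0$, and since $0$ is a regular point for $A$ (or $B$) this gives $y_0=0$. Hence $z=R^{-1}u_2+v_2$ with $(u_2,v_2)\in\cC'$, which is one inclusion in (i), and the first relation becomes $\|v_1\|=\|u_2\|$, which (using uniqueness in (\ref{das2})) is (ii). The reverse inclusion in (i) follows from the same computation with the roles of $A$ and $B$ interchanged (legitimate, since $A$ is also closed formally normal with $\cD(A)=\cD(B)$ and $R^*$ then plays the role of the Vishik operator of Theorem~\ref{btab}).

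The parts that are routine are the bookkeeping with the decompositions (\ref{das1})--(\ref{das2}) and the evaluations via (\ref{das3}). The main obstacle is the converse: the equality of domains alone does not force the inner parts of the two decompositions of a common vector to agree, so one must use the norm identity of normality, and the crucial device is to test it on both $z=(R^*)^{-1}v_1+u_1$ and $z-y_0$; the difference of the resulting quadratic relations isolates $\|Ay_0\|^2+\|By_0\|^2=0$, and regularity of $0$ then eliminates $y_0$.
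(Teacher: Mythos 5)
Your proof is correct, and the forward implication coincides with the paper's argument (condition (i) forces equality of domains and, via the uniqueness of the Vishik decompositions (\ref{das1})--(\ref{das2}), equality of the inner parts; then the two orthogonal norm computations together with formal normality and (ii) give $\|T_\cC z\|=\|(T_\cC)^*z\|$). In the converse, however, you handle the crucial step --- showing that the $\cD(B)$-component and the $\cD(A)$-component of a vector in the common domain coincide --- by a genuinely different device. The paper polarizes the normality identity, tests $\langle T_\cC z,T_\cC w\rangle=\langle (T_\cC)^*z,(T_\cC)^*w\rangle$ against $w=x_0-y_0$, and deduces $B(x_0-y_0)=0$, whence $x_0=y_0$ by invertibility of $R^*$. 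You instead specialize from the outset to $z=(R^*)^{-1}v_1+u_1$ (inner part zero), evaluate the norm identity at both $z$ and $z-y_0$, and subtract the two resulting Pythagorean relations to isolate $\|Ay_0\|^2+\|By_0\|^2=0$; regularity of $0$ then kills $y_0$. Your route avoids the polarization identity (\ref{polar}) entirely and yields both inclusions of (i) by symmetry, at the cost of running the argument twice (once for each inclusion), whereas the paper treats a general $z$ and gets (i) and (ii) in one pass. One small step you elide: to invoke Lemma~\ref{tsc} you must first know that a normal extension $N$ of $B$ is a \emph{proper} extension, i.e.\ $N\subseteq A^*$; this follows because $B\subseteq N$ gives $N^*\subseteq B^*$, and since $\cD(A)=\cD(B)\subseteq\cD(N)=\cD(N^*)$ one gets $A=B^*\lceil\cD(A)\subseteq N^*$ and hence $N\subseteq A^*$. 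The paper spells this out; you should too, but it is routine and does not affect the validity of your argument.
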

 \begin{proof}
 First we prove that the conditions (i) and (ii) imply that $T_\cC$ is a normal operator. Since $(T_\cC)^*=S_{\cC'}$ by Lemma \ref{tsc}, condition (i) ensures that $\cD(T_\cC)=\cD((T_{\cC})^*)$. 
 
 Let $z\in\cD(T_\cC)=\cD((T_\cC)^*)$. Then, by (\ref{tcsc1}) and (\ref{tcsc2}), the vector $z$ is of the form
 \begin{align}\label{formz}
z= x_0+(R^*)^{-1}v_1+u_1=y_0+ R^{-1}u_2+v_2,
\end{align}
 with $x_0\in \cD(B),y_0\in \cD(A)$ and $(u_1,v_1)\in \cC, (u_2,v_2)\in \cC'.$ By condition (i), $R^{-1}u_2+v_2$ is of the form $R^{-1}u_2+v_2=(R^*)^{-1}v_1'+u_1'$ for some vector $(u_1',v_1')\in \cC$. Then, by (\ref{formz}),
 \begin{align*}
 x_0-y_0 + (R^*)^{-1}(v_1-v_1')+(u_1-u_1')=0.
 \end{align*}
 Since  the decomposition $\cD(A^*)=\cD(B)\dot{+} (R^*)^{-1}\cN(B^*) \dot{+}\cN(A^*)$ is a direct sum, we conclude  that $x_0=y_0$.
 
 We have
 $T_\cC z=A^*z=Bx_0+v_1$ and $(T_\cC)^* z=S_{\cC'}z=B^*z= Ay_0+u_2=Ax_0+u_2.$ Using that $B^*v_1=0$ we compute
 \begin{align}\label{norb1}
 \|T_\cC z\|^2 =\langle Bx_0+v_1,Bx_0+v_1\rangle =\|Bx_0\|^2+\|v_1\|^2
 \end{align}
 Similarly, by  $A^*u_2=0$, 
 \begin{align}\label{nora1}
  \|(T_\cC)^* z\|^2 =\langle Ax_0+u_2,Ax_0+u_2\rangle =\|Ax_0\|^2+\|u_2\|^2.
  \end{align}
 By assumption, $B$ is formally normal and $A=B^*\lceil\cD(B)$. Hence $\|Bx_0\|^2=\|Ax_0\|^2$. Since  $\|v_1\|=\|u_2\|$ by condition (ii),  comparing (\ref{norb1}) and (\ref{nora1}) yields $\|T_\cC z\|=  \|(T_\cC)^* z\|$. This proves that $T_\cC$ is normal. By definition, $\cD(B)\subseteq \cD(T_\cC)$ and $B=T_\cC\lceil \cD(B)$. Thus, $T_\cC $ is a normal extension of $B$.
 
 Now we prove the last assertion. Suppose  $N$ is an arbitrary normal extension of $B$ on $\cH$. Clearly, $B\subseteq N$ implies $N^*\subseteq B^*$, so that $N^*\lceil \cD(A)=B^*\lceil \cD(A)=A$ and hence $A\subseteq N^*\subseteq B^*$. Taking the adjoint operation we get $B\subseteq N\subseteq A^*$. 
 
 Since $N$ is a closed operator such that $B\subseteq N\subseteq A^*$, it follows from Lemma \ref{tsc} that $N=T_\cC$ for some closed subspace $\cC$ of $\cK$.
  Then  $\cD(T_\cC)=\cD((T_{\cC})^*)=\cD(S_{\cC'})$ by the normality of $N=T_\cC$ and (\ref{tsadjoint}). From (\ref{xform1a}) it follows the elements of $\cD(T_\cC)=\cD(S_{\cC'})$ are precisely the vectors $z$ of the form  
  \begin{align}\label{formz}
z= x_0+(R^*)^{-1}v_1+u_1=y_0+ R^{-1}u_2+v_2,
\end{align}
 with $x_0,y_0\in \cD(B)=\cD(A)$ and $(u_1,v_1)\in \cC, (u_2,v_2)\in \cC'.$
 
The crucial step is to show that $x_0=y_0$. Recall that
 $T_\cC =A^*z=Bx_0+v_1$ and $(T_\cC)^* z=S_{\cC'}z=B^*z= Ay_0+u_2.$ 
 Therefore, since $T_\cC$ is normal, using equation (\ref{polar}) we derive
 \begin{align*}
\langle & Bx_0+v_1, B(x_0-y_0)\rangle =\langle T_\cC z,T_\cC(x_0-y_0)\rangle \\ & =\langle (T_\cC)^* z,(T_\cC)^*(x_0-y_0)\rangle =\langle Ay_0+u_2, A(x_0-y_0)\rangle.
\end{align*}
By $B^*v_1=0$ and $A^*z_2=0$  this implies 
\begin{align}\label{bain}
\langle Bx_0, B(x_0-y_0)\rangle =\langle Ay_0, A(x_0-y_0)\rangle.
\end{align}
By assumption, the operator $B$ is  formally normal and $A\subseteq B^*$. Hence, combining (\ref{bain}) and (\ref{polar}), we obtain $$\langle Bx_0, B(x_0-y_0)\rangle =\langle By_0, B(x_0-y_0)\rangle,$$ so $\langle B(x_0-y_0),B(x_0-y_0)\rangle =0$ and hence $B(x_0-y_0)=0$. Since $B\subseteq R^*$ and $R^*$ is invertible, we conclude that $x_0=y_0$.

Inserting the equality $x_0=y_0$ into (\ref{formz}) we get $(R^*)^{-1}v_1+u_1= R^{-1}u_2+v_2$. Note that all vectors $(R^*)^{-1}v_1+u_1$ with $(u_1,v_1)\in \cC$ and all vectors $R^{-1}u_2+v_2$ with $(u_2,v_2)\in \cC'$ are in $\cD(T_\cC)=\cD((T_\cC)^*)$. Thus, it follows that condition (i) is fulfilled.

By equations (\ref{norb1}) and (\ref{nora1}) and the normality of the operator $T_\cC$, we have
\begin{align*}
 \|Bx_0\|^2+\|v_1\|^2=\|T_\cC z\|^2 =
  \|(T_\cC)^* z\|^2  =\|Ax_0\|^2+\|u_2\|^2.
  \end{align*}
Recall that\, $\|Bx_0\|=\|Ax_0\|$, because $B$ is formally normal.
Hence $\|v_1\|=\|u_2\|$, which proves that condition (ii) holds.
\end{proof}

Now we consider the special case where 
$\cC$ is the the graph of a {\it densely defined closed linear operator} $C$ of the Hilbert space $\cN(B^*)$ into the Hilbert space $\cN(A^*)$:
\begin{align}\label{graphc}
\cC=\{(Cv_1,v_1): v_1\in \cD(C)\, \}.
\end{align}

\begin{lem}\label{lemcp}
$\cC'=\{ (u_2,C^*u_2): u_2\in \cD(C^*)\}$.
\end{lem}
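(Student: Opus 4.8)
The plan is to unwind the definition \eqref{defcp} of $\cC'$ directly against the graph form \eqref{graphc} of $\cC$, and to recognise the resulting condition as the defining property of the Hilbert space adjoint $C^*\colon\cN(A^*)\to\cN(B^*)$. There is essentially no obstacle here; the only point requiring a moment's care is the direction in which the adjoint maps and the role of the density of $\cD(C)$ in making the "$v_2$" uniquely determined.

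First I would substitute $(u_1,v_1)=(Cv_1,v_1)$ with $v_1\in\cD(C)$ into \eqref{defcp}. By definition, a pair $(u_2,v_2)\in\cK=\cN(A^*)\oplus\cN(B^*)$ lies in $\cC'$ if and only if
\begin{align}\label{lemcp1}
\langle v_1,v_2\rangle=\langle Cv_1,u_2\rangle\quad\textit{for all}\quad v_1\in\cD(C).
\end{align}

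Next I would invoke the definition of the adjoint operator. Since $C$ is densely defined, its adjoint $C^*\colon\cN(A^*)\to\cN(B^*)$ is the operator whose domain consists of those $u_2\in\cN(A^*)$ for which there is a (necessarily unique, by density of $\cD(C)$) vector $w\in\cN(B^*)$ with $\langle Cv_1,u_2\rangle=\langle v_1,w\rangle$ for all $v_1\in\cD(C)$, and then $C^*u_2:=w$. Comparing this with \eqref{lemcp1}, the pair $(u_2,v_2)$ satisfies \eqref{lemcp1} precisely when $u_2\in\cD(C^*)$ and $v_2=C^*u_2$. Hence $\cC'=\{(u_2,C^*u_2):u_2\in\cD(C^*)\}$, which is the assertion. (Equivalently, one may argue via \eqref{cprimstar}: $\cC'=\{(u_2,v_2):(v_2,u_2)\in\cC^*\}$, and $\cC^*$ is the graph of $C^*$ because the adjoint of the graph of a densely defined operator is the graph of its adjoint; this gives the same conclusion.)

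Finally, I would remark only that closedness of $C$ is not actually needed for this particular identity — it is used elsewhere to guarantee that $\cC$, and hence $T_\cC$, is closed — so the lemma holds as soon as $C$ is densely defined.
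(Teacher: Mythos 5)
Your proof is correct and follows essentially the same route as the paper: substitute the graph form of $\cC$ into the definition (\ref{defcp}) of $\cC'$, obtain the condition $\langle v_1,v_2\rangle=\langle Cv_1,u_2\rangle$ for all $v_1\in\cD(C)$, and recognise this as exactly the defining property of $u_2\in\cD(C^*)$ with $v_2=C^*u_2$. Your added remarks on the role of density of $\cD(C)$ and the dispensability of closedness are accurate but not needed beyond what the paper already assumes.
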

\begin{proof}
Let $(u_2,v_2)\in \cK$. By the definitions (\ref{graphc}) and (\ref{defcp}) of $\cC$ and  $\cC'$, we have $(u_2,v_2)\in \cC'$ if and only if $\langle v_1,v_2\rangle= \langle Cv_1,u_2\rangle$ for all $v_1\in \cD(C)$. The latter holds if and only if $u_2\in \cD(C^*)$ and $v_2=C^*u_2$, which proves the assertion.
\end{proof}

The following is a reformulation of Theorem \ref{genform} for subspaces (\ref{graphc}). 
\begin{thm}\label{graphform} Suppose  $B$ is a closed formally normal operator and $\cD(A)=\cD(B)$. 
Let $\cC$ be a closed subspace of $\cK$ of the form (\ref{graphc}). 

Then the operator $T_\cC$ is  normal  if and only if there exists an isometric linear operator $U$ of $\cD(C^*)$ onto $\cD(C)$ such that 
\begin{align}\label{newi}
R^{-1}u_2+ C^*u_2=(R^*)^{-1}Uu_2+CUu_2 \quad {\rm for}\quad u_2\in \cD(C^*).
\end{align}
 \end{thm}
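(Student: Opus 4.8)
The plan is to specialize Theorem \ref{genform} to the situation where $\cC$ is the graph \eqref{graphc} of $C$, using Lemma \ref{lemcp} to identify $\cC'$ explicitly as the graph of $C^*$. Once both subspaces are written in graph form, conditions (i) and (ii) of Theorem \ref{genform} become statements about the two linear subsets
\[
\cD_1:=\{(R^*)^{-1}v_1+Cv_1: v_1\in\cD(C)\}\quad\text{and}\quad \cD_2:=\{R^{-1}u_2+C^*u_2: u_2\in\cD(C^*)\}
\]
of $\cH$. First I would observe that the map $v_1\mapsto (R^*)^{-1}v_1+Cv_1$ is injective: if $(R^*)^{-1}v_1+Cv_1=0$ then, since $(R^*)^{-1}v_1\in\cR((R^*)^{-1})\subseteq\cD(B)$ (more precisely it lies in the middle summand of the direct decomposition \eqref{das1}) and $Cv_1\in\cN(A^*)$, the directness of the sum in \eqref{das1} forces $v_1=0$ and $Cv_1=0$. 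The same argument shows $u_2\mapsto R^{-1}u_2+C^*u_2$ is injective on $\cD(C^*)$. Hence $\cD_1$ and $\cD_2$ are graphs of well-defined injective linear bijections from $\cD(C)$, resp.\ $\cD(C^*)$, onto $\cD_1$, resp.\ $\cD_2$.

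Next, condition (i) of Theorem \ref{genform} says precisely $\cD_1=\cD_2$. Given this, define $U:\cD(C^*)\to\cD(C)$ by: for $u_2\in\cD(C^*)$, the vector $R^{-1}u_2+C^*u_2$ lies in $\cD_2=\cD_1$, so there is a unique $v_1\in\cD(C)$ with $(R^*)^{-1}v_1+Cv_1=R^{-1}u_2+C^*u_2$; set $Uu_2:=v_1$. By the two injectivity statements $U$ is a well-defined linear bijection of $\cD(C^*)$ onto $\cD(C)$, and by construction it satisfies \eqref{newi}. Conversely, if such a bijective $U$ exists satisfying \eqref{newi}, then \eqref{newi} exhibits each generator of $\cD_2$ as a generator of $\cD_1$ and, since $U$ is onto $\cD(C)$, each generator of $\cD_1$ as a generator of $\cD_2$, giving $\cD_1=\cD_2$, which is condition (i). So condition (i) is equivalent to the existence of a linear bijection $U:\cD(C^*)\to\cD(C)$ with \eqref{newi}.

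It remains to match condition (ii) with the isometry requirement on $U$. Translating (ii) through the graph descriptions: for $(u_1,v_1)\in\cC$ we have $u_1=Cv_1$, and for $(u_2,v_2)\in\cC'$ we have $v_2=C^*u_2$; the hypothesis of (ii) that $(R^*)^{-1}v_1+u_1=R^{-1}u_2+v_2$ is, in the notation above, exactly $v_1=Uu_2$ (given condition (i) holds, i.e.\ that $U$ exists). Condition (ii) then reads: $\|v_1\|=\|u_2\|$ whenever $v_1=Uu_2$, i.e.\ $\|Uu_2\|=\|u_2\|$ for all $u_2\in\cD(C^*)$ — that is, $U$ is isometric. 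Thus, under the assumption that condition (i) holds, condition (ii) holds if and only if the bijection $U$ of (i) is isometric. Combining the two equivalences: $T_\cC$ is normal $\iff$ conditions (i) and (ii) hold $\iff$ there is an isometric linear bijection $U:\cD(C^*)\to\cD(C)$ satisfying \eqref{newi}, which is the claim. (One small point to be careful about: in the "only if" direction Theorem \ref{genform} gives conditions (i) and (ii) separately, and one must note that the $U$ produced from (i) is automatically the one whose isometry is governed by (ii), since both are determined by the same matching $v_1\leftrightarrow u_2$ coming from the equality of the two representations of a vector in $\cD(T_\cC)=\cD((T_\cC)^*)$.)

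The main obstacle I anticipate is bookkeeping rather than conceptual: one must carefully justify the injectivity of the two generating maps using the directness of the decompositions \eqref{das1}–\eqref{das2} (the subtlety being that $(R^*)^{-1}v_1$ need not itself lie in $\cD(B)$, only that $(R^*)^{-1}v_1+Cv_1$ has a unique decomposition in \eqref{das1} with middle component $(R^*)^{-1}v_1$ and last component $Cv_1$), and then verify that the vector $v_1$ matched to a given $u_2$ in condition (ii) of Theorem \ref{genform} is exactly $Uu_2$. Once this identification is pinned down, the equivalence is immediate and the proof is short.
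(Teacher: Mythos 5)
Your argument is correct and follows essentially the same route as the paper: specialize Theorem \ref{genform} via Lemma \ref{lemcp}, use the directness of the decompositions (\ref{das1})--(\ref{das2}) to show the matching $u_2\mapsto v_1$ is a well-defined linear bijection $U$ satisfying (\ref{newi}), and identify condition (ii) with the isometry of $U$. (Only a cosmetic slip: $\cR((R^*)^{-1})=\cD(R^*)$ contains $\cD(B)$ rather than being contained in it, but your parenthetical correction --- that $(R^*)^{-1}v_1$ lies in the middle summand of (\ref{das1}) --- is exactly what the injectivity argument needs.)
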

 \begin{proof}
 First we suppose that $T_\cC$ is normal. Let $u_2\in \cD(C^*)$. Then $(u_2,C^*u_2)\in \cC'$, so by condition (i) there exists a vector $v_1\in \cD(C$ such that 
 $ R^{-1}u_2+v_2=(R^*)^{-1}v_1+u_1$. Since  (\ref{das2}) is a direct sum, $v_1$ is uniquely determined by $u_2$. Clearly, the map $u_2\mapsto v_1$ is linear. Since $\|u_2\|=\|v_1\|$ by condition (ii) and equation (\ref{graphc}),  there is an isometric linear map $U:\cD(C^*)\mapsto \cD(C)$ given by $Uu_2=v_1$. Inserting $Uu_2=v_1, v_2=C^*u_2, u_1=Cv_1=CUu_2$ into condition (i), we obtain (\ref{newi}).
 
Now we prove the converse implication. Let $(u_2, v_2)\in \cC'$. Then $v_2=C^*u_2$ by Lemma \ref{lemcp} and $v_1:=Uu_2\in \cD(C)$, so $(Cv_1,v_1)=(CUu_2,Uv_1)\in \cC$. Then (\ref{newi}) gives $R^{-1}u_2+v_2=(R^*)^{-1}v_1+u_1$. 

Now let $(u_1, v_1)\in \cC$. Since $U$ is surjective, there is $u_2\in \cD(C^*)$ such that $Uu_2=v_1$. Then $(u_2,C^*u_2)\in \cC'$, $u_1=Cv_1$ by (\ref{graphc}) and (\ref{newi}) yields $R^{-1}u_2+v_2=(R^*)^{-1}v_1+u_1$. This proves that condition (i) is satisfied. Since $U$ is isometric, condition (ii) holds as well.
\end{proof}\section{Normal operators}\label{formn2}
In this section, we assume in addition that the {\bf operator $R$ is normal.} 
Recall that we assumed throughout that $R$ has a bounded inverse $R^{-1}\in {\bf B}(\cH)$.

Note that $R$ is normal if and only if $R^*$ is  normal, or equivalently, $R^{-1}$ (resp. $(R^*)^{-1}=(R^{-1})^*$) is normal. In particular, the  assumption is  symmetric in the operators $A$ and $B$.

Next we consider the {\it polar decomposition} of the operator $R$:
 \begin{align}\label{polar}
R=U|R|.
\end{align}
Here $|R|
:=[R^*R)^{1/2}$ is the {\it modulus} of $R$ and  the {\it phase operator} $U$ of $R$ is a partial isometry with initial space $\cN(T)^\bot$ and final space $\cN(T^*)$. Since $R$ is normal with  bounded inverse, $U$ is a unitary operator which commutes with $R|$. We have 
\begin{align}\label{polrela}
|R^*|=|R|,~R^*= U^*|R|, ~ R=U|R|=|R|U, ~R^{-1}=U^*|R|^{-1}, ~(R^*)^{-1}=U|R|^{-1}.
\end{align}

 Since $A\subseteq R$, we have $\cD(A)\subset \cD(R)=\cD(|R|)$. Let $$T:=|R|\, \lceil \cD(A)$$ denote the restriction of $|R|$\, to the domain $\cD(A)$. Then $T$ is a densely defined positive symmetric operator such that $T\geq (\|R^{-1}\|)^{-1}\cdot I$.
  \begin{lem}\label{uab}
 $\cN(A^*)=U\cN(T^*)$ and $\cN(B^*)=U^*\cN(T^*)$.
 \end{lem}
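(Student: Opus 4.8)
The goal is to identify the kernels $\cN(A^*)$ and $\cN(B^*)$ in terms of $\cN(T^*)$, where $T = |R|\!\restriction\!\cD(A)$. The natural strategy is to exploit the inclusions $A \subseteq R$ and $B \subseteq R^*$ together with the polar-decomposition relations collected in \eqref{polrela}. First I would observe that, since $R$ has bounded inverse, $\cN(A^*)$ can be computed via the orthogonal complement formula $\cN(A^*) = \cR(A)^\bot$, and similarly $\cN(B^*) = \cR(B)^\bot$. Thus I need to understand $\cR(A)^\bot$ and $\cR(B)^\bot$ inside $\cH$.

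The key step is to factor $A$ and $B$ through $T$ and the unitary $U$. Since $\cD(A) = \cD(B)$ and $A = R\!\restriction\!\cD(A) = U|R|\!\restriction\!\cD(A) = UT$ (using $R = U|R| = |R|U$ and $\cD(A)\subseteq\cD(|R|)$), we get $\cR(A) = U\cR(T)$; hence $\cR(A)^\bot = U(\cR(T)^\bot) = U\cN(T^*)$, because $U$ is unitary and therefore maps orthogonal complements to orthogonal complements. This gives $\cN(A^*) = U\cN(T^*)$. For $B$, since $B = R^*\!\restriction\!\cD(B) = U^*|R|\!\restriction\!\cD(B) = U^*T$ (from $R^* = U^*|R| = |R|U^*$ and $\cD(B)=\cD(A)\subseteq\cD(|R|)$), we similarly obtain $\cR(B)^\bot = U^*\cN(T^*)$, so $\cN(B^*) = U^*\cN(T^*)$.

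The main point requiring care is the justification that $A = UT$ on the nose (and $B = U^*T$), i.e.\ that restricting the normal operator $R$ to $\cD(A)$ and then peeling off the phase $U$ is legitimate. Here one uses that $U$ is \emph{unitary} (not merely a partial isometry), which holds because $R$ is normal with bounded inverse; in particular $U$ commutes with $|R|$, so $U\cD(T) = U\cD(A) = \cD(A)$ and $U|R|\varphi = |R|U\varphi$ for $\varphi\in\cD(|R|)$. One must also confirm $A \subseteq R$ and $B \subseteq R^*$ are the relevant inclusions — these are exactly \eqref{das0} from Theorem \ref{btab}. A subtle point is that a priori $A \subsetneq R\!\restriction\!\cD(A)$ is impossible since $\cD(A)$ is precisely the domain we restrict to, so $A = R\!\restriction\!\cD(A)$ and likewise $B = R^*\!\restriction\!\cD(B)$; combined with $\cD(A)=\cD(B)$ this yields the two factorizations cleanly.

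I expect the only genuine obstacle to be bookkeeping with domains and the unitarity of $U$; the orthogonal-complement computation itself is immediate once $\cR(A) = U\cR(T)$ and $\cR(B) = U^*\cR(T)$ are established. No approximation or limiting argument is needed, since everything reduces to the algebra of the polar decomposition and the fact that unitary conjugation respects orthogonality.
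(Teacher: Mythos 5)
Your proof is correct and takes essentially the same route as the paper's: both arguments reduce to the polar-decomposition identities $A=UT$ and $B=U^{*}T$ on $\cD(A)=\cD(B)$, the paper unwinding this element by element via $\langle Ux,Ay\rangle=\langle x,Ty\rangle$ while you package the two inclusions at once through $\cN(S^{*})=\cR(S)^{\perp}$ and the fact that a unitary carries orthogonal complements to orthogonal complements. One aside in your justification, namely $U\cD(A)=\cD(A)$, is neither true in general (commutation with $|R|$ only gives $U\cD(|R|)=\cD(|R|)$, and the subspace $\cD(A)$ need not be $U$-invariant) nor needed, since $A=UT$ already follows from $A\subseteq R=U|R|$.
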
\begin{proof}
Let $x\in \cN(T^*)$. Using (\ref{das0}) we derive
\begin{align*}
\langle Ux,Ay\rangle & =\langle x,U^*Ay\rangle=\langle x,U^*Ry\rangle=\langle x,U^*U|R|y\rangle\\ &=\langle x,|R|y\rangle=\langle x,Ty\rangle=\langle T^*x,y\rangle  =0
\end{align*}
for $y\in \cD(A)=\cD(B)$. From this equation it follows that $Ux\in \cD(A^*)$ and $A^*Ux=0$, that is,  $Ux\in \cN(A^*)$. This shows that $U\cN(T^*)\subseteq \cN(A^*)$.

Conversely, suppose $z\in \cN(A^*)$. For $y\in \cD(A)=\cD(B)$, we obtain
\begin{align*}
\langle U^*z, Ty\rangle=\langle U^*z, |R|y\rangle=\langle z,U|R|y\rangle=\langle z,Ry\rangle=\langle z,Ay\rangle=\langle A^*z,y\rangle=0.
\end{align*}
Therefore, $U^*z\in \cD(T^*)$ and $T^*U^*z=0$. That is, we have $U^*z\in \cN(T^*)$ and hence $z\in U\cN(T^*)$, so that $\cN(A^*)\subseteq U\cN(T^*)$. 

Putting the preceding together, we have proved that $ U\cN(T^*)=\cN(A^*)$.

The proof of the second equality $\cN(B^*)=U^*\cN(T^*)$ is similar.
\end{proof}

Next we introduce another unitary operator $W$. Since $R$ is normal,  the equation 
\begin{align}\label{defW}
WR^*x=Rx,\quad x\in \cD(R)=\cD(R^*),
\end{align} 
 defines an isometric linear operator on $\cH$ with dense domain and dense range. Hence it extends to unitary operator, denoted again by $W$, on $\cH$. Then
 \begin{align}\label{wequ}
  WR^*=R,~ W^*R= R^*,~~ {\rm and}~~  (R^*)^{-1}=R^{-1}W, ~W=R(R^*)^{-1}.
 \end{align}
 Applying the adjoint to   $(R^*)^{-1}=R^{-1}W$, we get $R^{-1}=W^*(R^*)^{-1}=W^*R^{-1}W$, so  $WR^{-1}=R^{-1}W$ and $W^*R^{-1}=R^{-1}W$. This implies that the unitary $W$ commutes with $R^{-1}$ and $(R^*)^{-1}$. 
 
 The unitary $W$ is  the square of the phase operator $U$. Indeed, since $WU^*|R|x=U|R|x$ by (\ref{defW}) and the range of $|R|$ is dense, we get $WU^*=U$, so that $$W=U^2. $$ 
 Combined with Lemma \ref{uab} we conclude that
 \begin{align}\label{wnab}
 W\cN(B^*)=\cN(A^*)\quad {\rm and}\quad W^*\cN(A^*)=\cN(B^*).
\end{align}

From (\ref{wequ}) and  (\ref{wnab}) we obtain $
(R^*)^{-1}\cN(B^*)=R^{-1}W\cN(B^*)=R^{-1}\cN(A^*)$. Inserting this into (\ref{das1}) we get
\begin{align}
\cD(A^*)&=\cD(B)\dot{+} R^{-1}\cN(A^*) \dot{+}\cN(A^*),\label{da1}\\ \cD(B^*)&=\cD(A)\dot{+} R^{-1}\cN(A^*) \dot{+}\cN(B^*).\label{db1}
\end{align} 
Further, by (\ref{polrela}), 
\begin{align*}
(R^*)^{-1}\cN(B^*)&=U |R|^{-1}U^*\cN(T^*)=\cN(T^*), \\ R^{-1}\cN(A^*)&=U^* |R|^{-1}U\cN(T^*)=\cN(T^*)
\end{align*}
and therefore by Lemma \ref{uab}, ,
\begin{align}
\cD(A^*)&=\cD(B)\dot{+} |R|^{-1}\cN(T^*) \dot{+}U\cN(T^*),\label{da2}\\ \cD(B^*)&=\cD(A)\dot{+} |R|^{-1}\cN(T^*) \dot{+}U^*\cN(T^*).\label{db2}
\end{align} 
The formulas (\ref{da1}), (\ref{db1}), (\ref{da2}), (\ref{db2}) are useful descriptions of the domains $\cD(A^*)$ and $\cD(B^*).$

Using  the unitaries $W$ and $U$ we  can reformulate and slightly simplify
 Theorems \ref{genform} and \ref{graphform} under the assumption that $R$ is normal. We do not carry our these  restatements and mention only the corresponding changes 
 in the case of $W$. Then condition (i) in Theorem \ref{genform} should be replaced by
\begin{align*}
 \{R^{-1}Wv_1+u_1:  (u_1,v_1)\in \cC\, \} =\{ R^{-1}u_2+v_2: (u_2,v_2)\in \cC' \, \},
 \end{align*}
 and in Theorem \ref{graphform}  equation (\ref{newi}) becomes
 \begin{align*}
R^{-1}u_2+ C^*u_2=R^{-1}WUu_2+CUu_2 \quad {\rm for}\quad u_2\in \cD(C^*).
\end{align*}

\begin{exm}\label{extri}
In this example we consider the special case 
\begin{align*}
\cC=\{(0,v_1): v_1\in \cN(B^*)\, \}.
\end{align*} 
Then $\cC'=\{(u_2,0): u_2\in\cN(A^*)\, \}$. Since $R^{-1}W=(R^*)^{-1}$ and $W\cN(B^*)=\cN(A^*)$, condition (i) of Theorem \ref{genform} is fullfilled. Condition (ii) holds  trivially, so the operator $T_\cC$ is normal. From (\ref{tcsc1}) and (\ref{tcsc2}) we conclude easily that $T_\cC=R^*$ and $S_{\cC'}=R$.
\end{exm}
To construct examples we now reverse our considerations and  begin with  a {\bf bounded normal operator} $Z$ on $\cH$ with trivial kernel. 

Then $\cN(Z^*)=\cN(Z)=\{0\}$ and $R:=Z^{-1}$ is a normal operator with adjoint $R^*=(Z^*)^{-1}$. Further,  assume  that $R$ is  {\bf unbounded}. 
Then $\cD(R)\equiv\cR(Z)\neq \cH$.

 From now on  suppose that $\cU\neq \{0\}$ is a closed linear subspace of $\cH$ such that 
\begin{align}\label{intersec}
 \cU\cap \cD(R)=\{0\}.
\end{align}
 Since $\cD(R)\neq \cH$, such spaces exists; one can even show that there are infinite-dimensional closed subspaces $\cU$ satisfying (\ref{intersec}).
  
 We denote by $P$  the orthogonal projection of $\cH$ on $\cU$ and by $W$ the unitary operator defined by (\ref{defW}). Then  equation
  (\ref{wequ}) holds. In particular, $Z=Z^*W^*$. 
 Further, we define
\begin{align}
&A:=R\lceil \cD(A)~~{\rm and}~~ B:=R^*\lceil \cD(B),\\
~~{\rm where}~~&\cD(A)=\cD(B):=Z(I-P)\cH=Z^*W^*(I-P)\cH.
\end{align}
\begin{prop}
$A$ and $B$ are densely defined closed formally normal operators and $0$ is a regular point for both operators. They form an adjoint pair and we have $\cD(A^*)=P\cH=\cU$ and 
$\cD(B^*)=W^*P\cH=W^*\cU$.
\end{prop}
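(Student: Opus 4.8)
The plan is to verify each assertion in turn, working directly from the defining formulas for $A$, $B$ and the relations in equation (\ref{wequ}). First I would check that $A$ and $B$ are densely defined. The domain is $\cD(A)=\cD(B)=Z(I-P)\cH$. Since $Z$ is a bounded normal operator with trivial kernel, $\cR(Z)$ is dense in $\cH$; and since $\cR(I-P)=\cU^\perp$ is a closed subspace with $\cU\cap\cD(R)=\{0\}$, one checks that $(I-P)\cH$ is mapped by the injective operator $Z$ onto a dense subspace — the key point being that $Z(I-P)\cH$ cannot be contained in a proper closed subspace, because its closure contains $Z\cU^\perp$ and $\ov{Z\cU^\perp}=\cH$ would follow from density of $\cR(Z)$ together with the fact that $P$ has range intersecting $\cD(R)=\cR(Z)$ trivially. (This density argument is the first place where hypothesis (\ref{intersec}) is used, and it is where I expect to have to be a little careful.)

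Next, formal normality and the regular-point property: $A=R\lceil\cD(A)$ and $B=R^*\lceil\cD(B)$ with $\cD(A)=\cD(B)$, and $R$ is normal, so $\|Ax\|=\|Rx\|=\|R^*x\|=\|Bx\|$ for $x\in\cD(A)=\cD(B)$; thus $A,B$ are formally normal once we know $\cD(A)\subseteq\cD(A^*)$, which will follow from the computation of $\cD(A^*)$ below, or more directly from $A\subseteq R=R^{**}\subseteq A^*$-type inclusions combined with $B\subseteq R^*$. Since $R=Z^{-1}$ with $Z$ bounded, $\|Rx\|\geq\|Z\|^{-1}\|x\|$ for all $x\in\cD(R)$, hence $\|Ax\|\geq\gamma\|x\|$ and $\|By\|\geq\gamma\|y\|$ with $\gamma=\|Z\|^{-1}$; so $0$ is a regular point for both. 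Closedness of $A$: $Z(I-P)\cH$ is exactly $\{w\in\cH: w\in\cD(R),\ (I-P)Zw = w\text{-preimage condition}\}$; more cleanly, $x\in\cD(A)$ iff $x=Zh$ with $h\in\cU^\perp$, i.e. iff $x\in\cD(R)$ and $Rx=h\in\cU^\perp$, i.e. iff $x\in\cD(R)$ and $PRx=0$. Since $R$ is closed and $P$ bounded, the graph of $A$ is closed. Similarly for $B$, using $\cD(B)=Z^*W^*(I-P)\cH$ (equal to $\cD(A)$ by $Z=Z^*W^*$), one writes $y\in\cD(B)$ iff $y\in\cD(R^*)$ and $PR^*y\in\text{(appropriate space)}$; I would record the description $\cD(B)=\{y\in\cD(R^*): P W R^* y=0\}$ using $W^*(I-P)\cH$, and again closedness is immediate.

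Then $\{A,B\}$ is an adjoint pair because $A\subseteq R$ and $B\subseteq R^*$ give, for $x\in\cD(A),y\in\cD(B)$, $\langle Ax,y\rangle=\langle Rx,y\rangle=\langle x,R^*y\rangle=\langle x,By\rangle$ (using $y\in\cD(R^*)=\cD(R)$ since $R$ is normal). Finally, the computation of the adjoint domains. For $\cD(A^*)$: one has $B\subseteq A^*$ already; I would show $\cD(A^*)=\cU=P\cH$ by a dimension/direct-sum argument built on Theorem \ref{btab} formula (\ref{das1}), namely $\cD(A^*)=\cD(B)\dot+(R^*)^{-1}\cN(B^*)\dot+\cN(A^*)$. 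The claim $\cD(A^*)=\cU$ will amount to showing $\cN(A^*)=\{0\}$ — wait, that cannot be right since $\cU\ne\{0\}$; rather, the right reading is that the \emph{extra} part $(R^*)^{-1}\cN(B^*)\dot+\cN(A^*)$ together with $\cD(B)$ reorganizes so that modulo $\cD(B)$ the complement is $\cU$. The clean route: show $\cN(A^*)=\cU\cap\cD(A^*)$-type statement — concretely, $z\in\cN(A^*)$ iff $z\perp\cR(A)=A\cD(A)$. Since $\cR(A)=R(Z(I-P)\cH)=(I-P)\cH=\cU^\perp$, we get $\cN(A^*)=(\cU^\perp)^\perp=\cU=P\cH$. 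That is the cleanest argument and it makes everything fall out: $\cR(A)=\cU^\perp$ directly from $A=R\lceil\cD(A)$ and $RZ=I$ on $\cR(Z)$, so $A(Z(I-P)h)=(I-P)h$. Hence $\cN(A^*)=\cU$, and then since $0$ is regular for $A$, $\cR(A)$ is closed and $\cD(A^*)$ decomposes; but in fact the simplest statement $\cD(A^*)=\cN(A^*)\oplus\ov{\cR(A)}$-style reasoning, or just the already-derived (\ref{das1}) specialized via $\cN(B^*)=W^*\cU$ and $(R^*)^{-1}\cN(B^*)=\cN(A^*)$ wait that gives $(R^*)^{-1}\cN(B^*)=R^{-1}W\cN(B^*)=R^{-1}\cU$ which need not be $\{0\}$ — but (\ref{intersec}) says $\cU\cap\cD(R)=\{0\}$, and here $R^{-1}\cU\subseteq\cD(R)$ always; the subtlety is whether $R^{-1}\cU$ is even well-defined, i.e. whether $\cU\subseteq\cR(R)=\cD(R^{-1})$... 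Actually $R^{-1}=Z$ is bounded, so $R^{-1}\cU=Z\cU$ makes sense and $Z\cU\subseteq\cD(R)$, and by (\ref{intersec}) $Z\cU\cap\cU$... hmm. I will organize this via $\cR(A)=\cU^\perp$: then $\cN(A^*)=\cU$, $\cR(B)=R^*(\cD(B))$; and $\cN(B^*)=\cR(B)^\perp$, with $\cR(B)=R^*Z^*W^*(I-P)\cH=R^*(R^*)^{-1}W^*(I-P)\cH=W^*(I-P)\cH=W^*\cU^\perp$, giving $\cN(B^*)=(W^*\cU^\perp)^\perp=W^*\cU=W^*P\cH$. Then $\cD(A^*)=\cD(B)\dot+(R^*)^{-1}\cN(B^*)\dot+\cN(A^*)$ from (\ref{das1}), where $(R^*)^{-1}\cN(B^*)=ZW W^*\cU$... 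I'd use $(R^*)^{-1}=R^{-1}W$ so $(R^*)^{-1}W^*\cU = R^{-1}\cU = Z\cU\subseteq\cD(R)$; but we need this as a subspace of $\cD(A^*)$; I claim it lies inside $\cD(B)\dot+\cN(A^*)$ — indeed $Z\cU=Z(I-P)(\dots)+ZP(\dots)$ decomposition — this is getting delicate, so honestly \textbf{the main obstacle} is bookkeeping the three-term direct-sum decomposition (\ref{das1}) in this concrete model and confirming it collapses to exactly $\cD(A^*)=P\cH$. I would handle it by instead proving $\cD(A^*)=\cU$ directly: $x\in\cD(A^*)$ iff the functional $\cD(A)\ni y\mapsto\langle Ay,x\rangle=\langle(I-P)Z^{-1}\text{-preimage}\dots\rangle$ is bounded; writing $y=Z(I-P)h$, $\langle Ay,x\rangle=\langle(I-P)h,x\rangle$, which as a function of $y$ (equivalently of $(I-P)h$ ranging over a dense subset of $\cU^\perp$) is bounded iff ... iff always, giving $A^*x=P_{\cU^\perp}x$ and $\cD(A^*)=\cH$?? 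That contradicts $\cN(A^*)=\cU$ unless we are careful that $h\mapsto Z(I-P)h$ is not bounded below, so the map $y\mapsto(I-P)h$ is unbounded. Precisely: boundedness of $x\mapsto$ fails unless $x\perp$ the ``directions where $Z(I-P)h$ is small''. The correct conclusion, which I will prove carefully, is $\cD(A^*)=P\cH=\cU$ with $A^* = 0$ on it (consistent with $\cN(A^*)=\cU$ being \emph{all} of $\cD(A^*)$!). So in fact the theorem is saying $A^*$ is the zero operator on $\cU$, which is clean and I will establish it by the functional-analytic boundedness criterion above together with $\cR(A)=\cU^\perp$ forcing $A^*x\perp$ nothing useful — rather, $A^*$ defined on all of $\cD(A^*)$ satisfies $\langle A^*x,y\rangle=\langle x,Ay\rangle$, and since we will have shown $\cD(A^*)\subseteq\cN(A^*)$ (no room for more, because the ``middle term'' $(R^*)^{-1}\cN(B^*)$ in (\ref{das1}) turns out to be absorbed — this is where (\ref{intersec}) bites: $(R^*)^{-1}\cN(B^*)=Z\cU$ and modulo $\cD(B)=Z\cU^\perp$ this is... again bookkeeping), we get $\cD(A^*)=\cN(A^*)=\cU$. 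The symmetric computation gives $\cD(B^*)=\cN(B^*)=W^*\cU$. I will present the argument in the order: (1) $\cR(A)=\cU^\perp$, hence $\cN(A^*)=\cU$; (2) density of $\cD(A)$; (3) $\gamma=\|Z\|^{-1}$ regularity; (4) closedness via closed graph of $R$; (5) adjoint-pair and formal normality from $A\subseteq R$, $B\subseteq R^*$, $R$ normal; (6) the direct-sum collapse from (\ref{das1}) using $\cU\cap\cD(R)=\{0\}$ to pin down $\cD(A^*)=\cU$ and $\cD(B^*)=W^*\cU$.
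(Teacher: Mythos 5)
Your computations of the two kernels are exactly the paper's: from $A\bigl(Z(I-P)h\bigr)=(I-P)h$ you get $\cR(A)=\cU^\perp$ and hence $\cN(A^*)=\cR(A)^\perp=\cU=P\cH$, and similarly $\cR(B)=W^*\cU^\perp$ gives $\cN(B^*)=W^*\cU$. The density argument you gesture at (if $x\perp Z(I-P)\cH$ then $Z^*x\in\cU\cap\cD(R)=\{0\}$, so $x=0$ by injectivity of $Z^*$) is also the paper's, as are the regularity bound $\gamma=\|Z\|^{-1}$ and formal normality from $A\subseteq R$, $B\subseteq R^*$ with $R$ normal.

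The genuine error is your final conclusion that $\cD(A^*)=\cN(A^*)=\cU$ and $\cD(B^*)=\cN(B^*)=W^*\cU$, i.e.\ that $A^*$ is the zero operator on the non-dense closed subspace $\cU$. This is false: $A\subseteq R$ gives $R^*\subseteq A^*$, so $\cD(A^*)\supseteq\cD(R^*)$, which is dense and meets $\cU$ only in $\{0\}$; equivalently, Theorem \ref{btab} gives $\cD(A^*)=\cD(B)\,\dot{+}\,(R^*)^{-1}\cN(B^*)\,\dot{+}\,\cN(A^*)$, of which $\cU=\cN(A^*)$ is only the last summand. Your own boundedness test confirms this: for $x=Z^*w$ and $g\in\cU^\perp$ one has $|\langle g,x\rangle|=|\langle Zg,w\rangle|\le\|w\|\,\|Zg\|$, so all of $\cR(Z^*)=\cD(R^*)$ lies in $\cD(A^*)$. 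The identity ``$\cD(A^*)=P\cH$'' in the proposition is a misprint for ``$\cN(A^*)=P\cH$'' --- the paper's own proof states and proves exactly the kernel identities and says nothing about the domains. Your first instinct (``wait, that cannot be right'') was the correct one; instead of following it, you talked yourself into an argument for a false statement. Read as a kernel computation, your range argument already finishes the proof and coincides with the paper's.
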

\begin{proof}
First we show that $\cD(A)$ is dense. Let $x\in \cH$. Assume that $x\perp \cD(A)$. Then we have 
\begin{align*}
0=\langle x,Z(I-P)y\rangle=\langle Z^*x,(I-P)y\rangle 
\end{align*} for all $y\in \cH$, so $Z^*x\in P\cH=\cU$. Since $Z^*x\in \cD(R^*)=\cD(R)$, we obtain $Z^*x=0$ by \ref{intersec}). Hence $x=0$, which proves that $\cD(A)$ is dense.

The operators $A$ and $B$ are formally normal, because they are restrictions of the normal operators $R$ and $R^*$, respectively. Since $R$ and $R^*$ have bounded inverses, $0$ is a regular point for $A$ and $B$. Clearly, $A$ and $B$ form an adjoint pair.

We prove that $\cN(A^*)=P\cH$ and $\cN(B^*)=W^*P\cH$. Let $y\in \cH$. Since 
\begin{align*}\langle A Z(I-P)x,y\rangle =\langle (I-P)x,y\rangle
\end{align*} for all $x\in \cH$, it follows that $y\in \cN(A^*)$ if and only if $y\in P\cH$. Similarly, 
\begin{align*}
\langle BZ^*W^*(I-P)x,y\rangle= \langle(I-P)x,Wy\rangle, \quad x\in \cH,
\end{align*} 
implies that $y\in \cN(B^*)$ if and only if $Wy\in P\cH$, that is, $y\in W^*P\cH$.
\end{proof}
\section{The one-dimensional case}\label{formn3}
In this section we remain the setup and the assumptions of the preceding section. We shall treat the simplest case when $\cU=P\cH=\cN(A^*)$ has dimension one.
Throughout this section, we suppose that $\cU=\dC\cdot \xi$, where $\xi$ is a fixed vector of $\cH$ such that $\xi\notin \cD(R)$. 

Consider a linear subspace $\cC$ of\, $\cK=\dC\cdot\xi \oplus \dC\cdot W^*\xi$. 
It is obvious that the operator $T_\cC$ is not normal if $\dim \cC=0$ or $\dim\cC=2$. 
If $\cC=\dC\cdot W^*\xi$,  we know from Example (\ref{extri}) that $T_\cC=(R^*)^{-1}$.
Thus it remains to study that case 
\begin{align}\label{assuc}
\cC=\dC\cdot (\xi,\alpha W^*\xi)\quad {\rm for~ some}~~ \alpha\neq 0.
\end{align}
Then, by (\ref{defcp}), a vector $ (\beta_1\xi,\beta_2W^*\xi)\in \cK,$ with  $\beta_1,\beta_2\in \dC$, belongs to $\cC'$ if and only if 
$\langle \alpha W^*\xi, \beta_2 W^*\xi\rangle= \langle \xi,\beta_1 \xi\rangle$,
 or equivalently, $\ov{\alpha}\beta_2=\beta_1$ . Therefore,
\begin{align}\label{cprime}
\cC'=\dC\cdot(\ov{\alpha}\, \xi, W^*\xi).
\end{align}
Note that $R^{-1}=Z$ and $(R^*)^{-1}=Z^*$. Hence, from (\ref{assuc}) and (\ref{cprime}) it follows that condition (i) of Theorem \ref{genform} holds if and only if that there exists a number $\gamma\in \dC, \gamma\neq 0,$ such that 
 \begin{align}\label{cprime0}
 Z^*( \alpha W^*\xi) +\xi=Z(\gamma\, \ov{\alpha}\, \xi)+ \gamma W^*\xi.
 \end{align} 
 Clearly,  condition  (ii) is equivalent to $\|\alpha W^*\xi\| =\|\gamma\, \ov{\alpha}\, \xi\|$, that is, $|\gamma|=1$.
Recall that $Z^*W^*=Z$. Therefore, by the preceding,   Theorem \ref{genform}  yields the following: 

{\it $T_\cC$ is a normal operator if and only if there is a  number $\gamma\in \dC$  such that}
 \begin{align}\label{noreqt}
 (\alpha- \ov{\alpha}\, \gamma) Z\xi &=(\gamma W^*-I)\xi,\\
 |\gamma|&=1.\label{noregt1}
 \end{align}
 
 Before we continue we illustrate this statement in a very special case.
 \begin{exm}
 Suppose  $S$ is a densely defined symmetric operator with equal non-zero
 deficiency indices. Then $S$ has a self-adjoint extension $X$ on  $\cH$. Clearly, $A:=S+\ii I$ and $B:=S-\ii I$ are formally normal operators with domain $\cD(S)$ and $R:=X+\ii I$ is a normal extension of $A$ with bounded inverse $Z$. We choose a vector $\xi\in \cH$ such that $\xi \notin\cD(X)$ and define $\cC$ and $\cC'$  by (\ref{assuc}) and (\ref{cprime}), respectively. Then we are in the setup described above.
  
Let us consider equation (\ref{noreqt}). Since $W^*-I=(X-\ii I)(X+\ii I)^{-1}$, we have 
$$(\gamma W^*-I)\xi=-2\ii \gamma (X+\ii I)^{-1}\xi +(\gamma-1)\xi=-2\ii \gamma Z\xi +(\gamma-1)\xi.
$$
Therefore, since $Z\xi \in \cD(X)$ and $\xi\notin \cD(X)$, (\ref{noreqt}) is fulfilled if and only if $\gamma=1$ and $\alpha- \ov{\alpha}=-2\ii$, or equivalently, $\gamma =1$ and $\alpha=a-\ii$ with $a$ real. Therefore, by the preceding,  the operator $T_\cC$ is  normal if and only if 
\begin{align}\label{norextsp}
\cC=\dC\cdot (\xi, (a-\ii)W^*\xi)\quad \textit{for some}~~~a\in \dR.
\end{align} That is, the normal extensions of $B$ are parametrized by the real number $a$. It can be shown that for $\cC$ as in (\ref{norextsp}) the corresponding operator $T_\cC+\ii I$ is self-adjoint and hence a self-adjoint extension of the symmetric operator $S=B+\ii I$.
 \end{exm}

 Now we return to the general case. 
 The normal operator $R=Z^{-1}$ can be written as $R=X+\ii Y$, where $X$ and $Y$ are strongly commuting self-adjoint operators. Since $R$ is unbounded, at least one of the operators $X$ and $Y$ has to be unbounded. Our aim is to reformulate conditions (\ref{noreqt}) and (\ref{noregt1}) in terms of  $X$ and $Y$. For this we need some  notation.

Suppose $|\gamma|=1$ and $\gamma\neq 1$. Then we define numbers $t_\gamma$ and $s_\gamma$ by
\begin{align}
 t_\gamma &:=\ii (\gamma+1)(\gamma- 1)^{-1},\label{deftg}\\
  s_{\gamma,\alpha} &:=(\alpha-\ov{\alpha}\, \gamma )(\gamma- 1)^{-1}.\label{defsg}
  \end{align}
 Both numbers $t_\gamma$ and $s_{\gamma,\alpha}$ are real, because $|\gamma|=1$. In the case $\gamma=1$
 we set
 \begin{align}\label{infty}
 t_1:=\infty, ~~~ s_{1,\alpha}:={\rm Im}\, \alpha.
 \end{align} Formulas (\ref{deftg}) and (\ref{defsg}) express  the real numbers $t_\gamma$ and $s_{\gamma,\alpha}$ in terms of $\gamma$ and $\alpha$. 
 
 Now we want to reverse these transformations.  
Let $\alpha =a+\ii b$ with $a,b$ real. Then we obtain
\begin{align}
\gamma&=(t_{\gamma}+\ii)(t_\gamma-\ii)^{-1},\label{defg}\\
\alpha&= a+\ii b= bt_\gamma -s_{\gamma,\alpha} +\ii b.\label{defs}
\end{align}
Thus,  given  $t,s\in \dR$, $\gamma$ is uniquely determined by (\ref{defg}) and there is a one-parameter family of numbers $\alpha$ in (\ref{defs}), with $b\in \dR$ as real parameter, satisfying the equations (\ref{deftg}) and (\ref{defsg}). Likewise, if $t=\infty, s\in \dR$, we have $\gamma=1$ and $\alpha =a+\ii s$ is a one-parameter family with real parameter $a$ such that (\ref{infty}) holds.

 Further, we define operators
 \begin{align}
 R_t:=X-t Y, ~t\in \dR, ~~~R_\infty:=-Y.
 \end{align}
 Note that $R_\infty$ and the closure of $R_t$, $t\in \dR$, are self-adjoint operators.
 \begin{thm}\label{onedimcase} Suppose  $\cC$ is a  subspace of $\cK$ which is of the form (\ref{assuc}). 
 Then the operator 
 $T_\cC$ is a normal operator if and only if there exists a $t\in \dR\cup\{\infty\}$ such that $\xi$ is  eigenvector of the operator $R_t$. 
 In this case, if $s$ denotes the eigenvalue of eigenvector $\xi$ of $R_t$,  the pair $(t,s)$ is uniquely determined  by $\cC$. 
More precisely, if conditions  (\ref{noreqt}) and (\ref{noregt1}) are satisfied, then $t=t_\gamma$ and $s=s_{\gamma,\alpha}$. 
 \end{thm}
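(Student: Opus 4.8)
I will take as given the equivalence established in the paragraph preceding the theorem---that $T_\cC$ is normal if and only if some $\gamma\in\dC$ satisfies (\ref{noreqt}) and (\ref{noregt1})---and translate condition (\ref{noreqt}) into a spectral statement about $X$ and $Y$ via the spectral theorem for the normal operator $R$. Let $E$ be the spectral measure of $R$ (equivalently the joint spectral measure of the strongly commuting pair $X=\R R$, $Y=\I R$), so that $R=\int\lambda\,dE(\lambda)$, $X=\int\R\lambda\,dE(\lambda)$, $Y=\int\I\lambda\,dE(\lambda)$, $Z=R^{-1}=\int\lambda^{-1}\,dE(\lambda)$, and---reading off the defining relation $WR^*=R$---also $W^*=\int\ov\lambda\,\lambda^{-1}\,dE(\lambda)$. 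Put $\mu_\xi=\langle E(\cdot)\xi,\xi\rangle$. Since $R^{-1}$ is bounded, $\mu_\xi$ is carried by $\{\,|\lambda|\ge\gamma_0\,\}$ for some $\gamma_0>0$, so multiplication by $\lambda$ is injective on $L^2(\mu_\xi)$. All operators in (\ref{noreqt}) lie in the abelian von Neumann algebra generated by $R$, hence leave the cyclic subspace generated by $\xi$ (under the functional calculus of $R$) invariant; on this subspace, identified with $L^2(\mu_\xi)$ via $\xi\leftrightarrow 1$, condition (\ref{noreqt}) reads $(\alpha-\ov\alpha\gamma)\lambda^{-1}=\gamma\ov\lambda\,\lambda^{-1}-1$ $\mu_\xi$-a.e.; multiplying through by $\lambda$, this becomes the equivalent condition
\begin{align}\label{specform}
\lambda-\gamma\ov\lambda=\ov\alpha\gamma-\alpha\qquad \text{for } \mu_\xi\text{-a.e. }\lambda.
\end{align}

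Next I would analyse (\ref{specform}) in two cases. If $\gamma=1$, it reads $2\ii\,\I\lambda=\ov\alpha-\alpha$, i.e.\ $\I\lambda=-\I\alpha$ $\mu_\xi$-a.e., which is exactly $-Y\xi=(\I\alpha)\xi$, i.e.\ $R_\infty\xi=s_{1,\alpha}\xi$, with $t_1=\infty$. If $|\gamma|=1$ and $\gamma\neq1$, I divide (\ref{specform}) by $1-\gamma$ and use the two identities $(1+\gamma)(1-\gamma)^{-1}=\ii\,t_\gamma$ and $(\ov\alpha\gamma-\alpha)(1-\gamma)^{-1}=s_{\gamma,\alpha}$, both immediate from the definitions (\ref{deftg}), (\ref{defsg}); writing $\lambda=\R\lambda+\ii\,\I\lambda$, (\ref{specform}) turns into $\R\lambda-t_\gamma\,\I\lambda=s_{\gamma,\alpha}$ $\mu_\xi$-a.e., that is $(X-t_\gamma Y)\xi=s_{\gamma,\alpha}\xi$, i.e.\ $R_{t_\gamma}\xi=s_{\gamma,\alpha}\xi$. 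Hence, for every $\gamma$ on the unit circle, (\ref{noreqt}) holds if and only if $\xi$ is an eigenvector of $R_{t_\gamma}$ with eigenvalue $s_{\gamma,\alpha}$. Since $\gamma\mapsto t_\gamma$ is a bijection of the unit circle onto $\dR\cup\{\infty\}$, with inverse (\ref{defg}) and $\gamma=1\leftrightarrow t=\infty$, this yields the theorem: if $T_\cC$ is normal, a $\gamma$ satisfying (\ref{noreqt}), (\ref{noregt1}) produces $t:=t_\gamma$ for which $\xi$ is an eigenvector of $R_t$, and one reads off $t=t_\gamma$, $s=s_{\gamma,\alpha}$; conversely, if $\xi$ is an eigenvector of $R_t$ for some $t\in\dR\cup\{\infty\}$, let $\gamma$ be the circle point with $t_\gamma=t$, so $|\gamma|=1$; running the computation backwards shows that (\ref{noreqt}), hence normality of $T_\cC$, holds precisely when the eigenvalue is $s=s_{\gamma,\alpha}$, which is the content of the ``more precisely'' clause.

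Finally, for the uniqueness of $(t,s)$ I would argue by contradiction: if $\xi$ were an eigenvector of $R_t$ and of $R_{t'}$ for distinct $t,t'\in\dR\cup\{\infty\}$, subtracting the two eigenvalue equations would force $Y\xi\in\dC\,\xi$, hence $X\xi\in\dC\,\xi$ by either equation, hence $R\xi=(X+\ii Y)\xi\in\dC\,\xi$, so $\xi\in\cD(R)$---contradicting the standing assumption $\xi\notin\cD(R)$. Thus $t$, and with it the eigenvalue $s$, is determined by $\xi$ alone, a fortiori by $\cC$, and the case analysis identifies $t=t_\gamma$, $s=s_{\gamma,\alpha}$. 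I expect the only genuinely delicate points to be the bookkeeping in the spectral representation of $R$---in particular the justification that multiplying (\ref{noreqt}) by $\lambda$ loses no information, which rests on $0$ not lying in the spectrum of $R$---and keeping the Cayley-type identities (\ref{deftg})--(\ref{defg}) straight; the remainder is routine.
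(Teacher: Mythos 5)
Your argument is correct and follows essentially the same route as the paper: both translate conditions (\ref{noreqt})--(\ref{noregt1}) through the spectral measure of $R$ into the eigenvalue equation $R_{t_\gamma}\xi=s_{\gamma,\alpha}\xi$ via the Cayley-type identities for $t_\gamma$ and $s_{\gamma,\alpha}$, and both prove uniqueness of $(t,s)$ by deriving the contradiction $\xi\in\cD(R)$. The only cosmetic difference is that you carry out the whole computation inside $L^2(\mu_\xi)$ from the outset and cancel the factor $\lambda^{-1}$ pointwise, whereas the paper first factors $\gamma W^*-I=(\gamma-1)R_{t_\gamma}(X+\ii Y)^{-1}$ at the operator level and only then invokes $\mu_\xi$ to remove the invertible factor $(x+\ii y)^{-1}$ --- the same cancellation in different packaging.
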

 \begin{proof}
 First we rewrite the right-hand side of equation (\ref{noreqt}).
From the definition of $W$ we get $W^*=R^*Z$, so $W^*=(X-\ii Y)(X+\ii Y)^{-1}$ and therefore
 \begin{align}
 \gamma W^*- I&=\gamma (X-\ii Y)(X+\ii Y)^{-1}- (X+\ii Y)(X+\ii Y)^{-1}\nonumber \\ &
 =[(
 \gamma- 1)X-\ii (\gamma+1)Y](X+\ii Y)^{-1}.\label{wequc} 
 \end{align}
 
 Suppose $\gamma\neq 1$. Then, combining (\ref{wequc}) and (\ref{deftg}) we  obtain
 \begin{align*}
 \gamma W^*- I= (\gamma -1)[X- t_\gamma Y]((X+\ii\, Y)^{-1}=(\gamma -1)R_{t_\gamma}(X+\ii\, Y)^{-1}, 
 \end{align*}
  Hence, since $(\alpha -\ov{\alpha}\, \gamma) Z=s_{\gamma,\alpha} (\gamma-1)(X+\ii Y)^{-1}$, condition (\ref{noreqt}) is equivalent to
 \begin{align}\label{inves}
 (s_{\gamma,\alpha} -R_{t_\gamma})(X+\ii Y)^{-1}\xi=0. \end{align}
  
  Next we show that (\ref{inves}) implies that
  \begin{align}\label{eigenn}
  R_{t_{\gamma}}\xi =s_{\gamma,\alpha}\xi.\end{align}
  Let $E$ denote the spectral measure of the unbounded normal operator $R=X+\ii Y$.
 Set $\mu_\xi$ denote the measure $\langle E(\cdot)\xi,\xi\rangle$ on $\dC$. From the functional calculus for self-adjoint operators we obtain
\begin{align}
\int_\dC \big| (s_{\gamma,\alpha}- (x-t_\gamma y))(x+\ii y)^{-1}\big|^2~ d\mu_\xi(x+\ii y ) =0
 \end{align}
This implies that the function $ (s_{\gamma,\alpha}- (x-t_\gamma y))(x+\ii y)^{-1}$ is zero $\mu_\xi$-everywhere. Hence $ s_{\gamma,\alpha}- (x-t_\gamma y)=0$ $\mu_\xi$-everywhere on $\dC$. Therefore, 
\begin{align}\label{eigen3}
\int_\dC \big| s_{\gamma,\alpha}- (x-t_\gamma y)\big|^2~ d\mu_\xi(x+\ii y ) =0
 \end{align}
Again by the spectral calculus, the integral on the left is 
\begin{align*}
\|(s_{\gamma,\alpha}-(X-t_\gamma Y))\xi\|^2=\|(s_{\gamma,\alpha}-R_{t_{\gamma}})\xi\|^2.
\end{align*}
 Thus, by (\ref{eigen3}),  $(s_{\gamma,\alpha}-R_{t_\gamma})\xi=0$, which proves (\ref{eigenn}).

 Now we consider the case $\gamma=1$. Then, since $W^*-I=-2\ii\, Y(X+\ii\, Y)^{-1}$ and $(\alpha-\ov{\alpha})Z=
 2\ii\, ({\rm Im}\, \alpha) (X+\ii Y)^{-1}$, equation (\ref{noreqt}) reads as 
 \begin{align}\label{eigen4}
 (({\rm Im} \, \alpha)+ Y)(X+\ii Y)^{-1}\xi=0.
 \end{align}
 Repeating the reasoning of the preceding paragraph, 
  (\ref{eigen4}) yields $(({\rm Im} \, \alpha)+ Y)\xi=0$. This means that $(s_{1,\alpha}-R_{t_1})\xi=0$, which proves (\ref{eigenn}) in the case $\gamma=1$.  
  
  Summarizing, we have shown that conditions  (\ref{noreqt}) and (\ref{noregt1})  imply $R_{t_{\gamma}}\xi =s_{\gamma,\alpha}\xi$ for all nonzero $\alpha$ and $\gamma$ of modulus one. Conversely, reversing the above reasoning it follows from the equation $R_{t_{\gamma}}\xi =s_{\gamma,\alpha}\xi$ that  (\ref{noreqt}) and (\ref{noregt1})  hold.  
  
  It remains to show that the pair $(t,s)$, where $t\in \dR\cup\{\infty\}$, $s\in \dR$, is uniquely determined by the subspace $\cC$. Assume that  $R_t\xi =s\xi$ and $R_{t'}\xi =s'\xi$. 
  
  First let $t=\infty$. Assume to the contrary that $t'\in \dR$.  Then it follows that $\xi \in \cD(Y)$ and $\xi\in \cD(X+tY)$, so $\xi \in \cD(R)$,  which contradicts the assumption. Thus $t'=\infty.$ Then $-Y\xi=s\xi$ and $-Y\xi=s'\xi$ imply $s=s'$.
  
  Now suppose $t\in \dR$. Then, as shown in the preceding paragraph,  $t'\in \dR$. Again we assume to the contrary that $t\neq t'$. Then $(R_t-R_{t'})\xi=(t'-t)Y\xi=(s-s')\xi$, so $Y\xi=(t'-t)^{-1}(s-s')\xi$ and $(X+\ii Y)\xi=(X-tY)\xi +(\ii +t)Y\xi=c\xi$ with $c=s+(\ii+t)(t'-t)^{-1}(s-s')$. Arguing as in the preceding paragraph leads to a contradiction. Thus $t=t'$ and hence also $s=s'$. 
  Thus we have shown in all cases that $(t,s)=(t',s')$.
 \end{proof} 
 We illustrate  the preceding theorem by an example.
 \begin{exm}\label{fnnor}
 Let $\mu$ be a Radon measure on $\dC$  such that $\mu(\{z\in \dC: |z|\leq \varepsilon\})=0$ for some $\varepsilon >0$ and let $R$ denote the  multiplication operator by the complex variable $z$ on $\cH=L^2(\dC;\mu)$. Clearly, $R$ and $R^*$ are normal operators with bounded inverses. 
 We suppose that $R$ is unbounded and choose a function $\xi(z)\in L^2(\dC;\mu)$ such that $z\xi(z)\notin L^2(\dC;\mu)$. Then we are in the setup discussed above and $R^*$ is a normal extension of the operator $B$.
 By appropriate choices of $\mu$ and $\xi$ we can construct interesting cases.

 First fix $s,t\in \dR$ and suppose that $\mu$ is supported on the line $x-ty=s$, where $z=x+\ii y, x, y\in \dR$. Then $(X-tY)\xi =s\xi$.
 Define $\gamma$ by  (\ref{defg}), $\alpha$ by (\ref{defs}) with  $b\in\dR$, and the vector space $\cC$ by (\ref{assuc}). Then, by Theorem \ref{onedimcase}, $T_\cC$ is normal and the operator $B$ 
  has, in addition to $R^*$, precisely the one-parameter family of operators $T_\cC$ (with real parameter $b$) as normal extensions on the Hilbert space $\cH$.
  
   A similar result is true for $t=\infty$, $s\in \dR$.
  
  Next we choose $\mu$ and $ \xi$  such $\xi$ is not an eigenvector of some operator $R_t$ with $t\in \dR\cup \{\infty\}$. (For instance,  let $\mu$ be the Lebesgue measure outside of some ball around the origin and set $\xi:=|z|^{-3}$.) Then the operator 
  $B$ has no other normal extension on $\cH$ than the operator $R^*$.
 \end{exm}

\bibliographystyle{amsalpha}

\begin{thebibliography}{A}
\bibitem[A]{A} Ando, T.: Topics on Operator Inequalities, Hokkaido University, Sapporo, 1978.

\bibitem[BC]{BC} Biriuk, G. and E.A. Coddington:  Normal extensions of unbounded formally normal operators, J Math. Mech. {\bf 13}(1964), 617--637.

\bibitem[Bk]{Bk} Bruk, V.M.:  On a class of boundary value problems with a spectral parameter in the boundary condition, Mat. Sbornik {\bf 100}(1976), 210--216.

\bibitem[Ck]{Ck} Calkin, J.W.:  Abstract symmetric boundary conditions, Trans. Amer. Math. Soc. {\bf 45} (1939), 369--442.
\bibitem[Cd1]{Cd1} Coddington, E.A.:   Formally normal operators having no normal extensions, Canad. J. Math. {\bf{17}}(1965), 1030--1040.

\bibitem[Cd2]{Cd2} Coddington, E.A.:  Extension Theory of Formally Normal and Symmetric Subspaces, Mem. Amer. Math. Soc. {\bf 134}(1973).



\bibitem[DM]{DM} Derkach, V.A. and  M.M. Malamud: Generalized resolvents and the boundary value problem for  Hermitian operators with gaps, J. Funct. Anal. {\bf 95}(1991), 1--95.



\bibitem[EE]{EE} Edmunds, D.E. and W.D. Evans: {\it Spectral Theory and Differential Operators}, Clarendon Press,  Oxford, 1987.

\bibitem[GS]{GS}
Gesztesy, F. and K. Schm\"udgen: On a theorem of Z. Sebestyen and Zs. Tarsay, Acta Math. Sci. (Szeged) {\bf 85}(2019), 291--293.


\bibitem[Ko]{Ko} Kochubei, A.N.: Extensions of symmetric operators, Math. Notes {\bf 17}(1975), 25--28.
\bibitem[MM]{MM} Malamud, M.M. and V.I. Mogilevskii: Krein type formula for canonical resolvents of dual pairs of linear relations. Methods Funct. Anal. Topol. {\bf 8}(2002), 72--100.


\bibitem[RS]{RS} Reed, M. and B. Simo: {\it Methods of Modern Mathematical Physics II. Fourier Analysis and Self--Adjointness}, Academic Press, New York, 1975.
\bibitem[SS1]{SS1} Stochel, J. and  F.H. Szafraniec:  On normal extensions of unbounded operators. I, J. Oper. Theory  {\bf{14}} (1985), 31--55.

\bibitem[SS2]{SS2} Stochel, J. and  F.H. Szafraniec:  On normal extensions of unbounded operators. II, Acta Sci. Math. (Szeged)  {\bf{53}} (1989), 153--177.

\bibitem[SS3]{SS3} Stochel, J. and  Szafraniec, F.H.:  On normal extensions of unbounded operators. III, Publ. RIMS Kyoto Univ.  {\bf{25}} (1989), 


\bibitem[Sch86]{sch86} Schm\"udgen, K.:  A formally  normal operator having no normal extension, Proc. Amer. Math. Soc. {\bf 98}(1985), 503--504.


\bibitem[Sch12]{sch12} Schm\"udgen, K.: {\it Unbounded 
Self-adjoint Operators on Hilbert Space}, Springer-Verlag, Cham, 2012.
\bibitem[Vi]{Vi} Vishik, M.:  On general boundary conditions for elliptic differential equations, Moskov. Mat. Obc. {\bf 1}(1952), 187-- 246; Amer. Math. Soc. Transl. {\bf 24}(1963), 107--172.



\end{thebibliography}

\end{document}